\numberwithin{equation}{section}
\newcommand{\be}{\begin{eqnarray}}
\newcommand{\ee}{\end{eqnarray}}
\newcommand{\ce}{\begin{eqnarray*}}
\newcommand{\de}{\end{eqnarray*}}
\newtheorem{theorem}{Theorem}[section]
\newtheorem{lemma}[theorem]{Lemma}
\newtheorem{remark}[theorem]{Remark}
\newtheorem{definition}[theorem]{Definition}
\newtheorem{proposition}[theorem]{Proposition}
\newtheorem{Examples}[theorem]{Example}
\newtheorem{corollary}[theorem]{Corollary}
\def\nor{|\mspace{-3mu}|\mspace{-3mu}|}
\def\eps{\varepsilon}
\def\e{\mathrm{e}}
\def\p{\partial}
\def\[{{\Big[}}
\def\]{{\Big]}}
\def\<{{\langle}}
\def\>{{\rangle}}
\def\({{\Big(}}
\def\){{\Big)}}
\def\bx{{\mathbf{x}}}
\def\dif{{\mathord{{\rm d}}}}
\def\no{\nonumber}
\def\={&\!\!=\!\!&}
\def\cB{{\mathcal B}}
\def\cF{{\mathcal F}}
\def\cM{{\mathcal M}}
\def\cP{{\mathcal P}}
\def\cW{{\mathcal W}}
\def\mC{{\mathbb C}}
\def\mE{{\mathbb E}}
\def\mH{{\mathbb H}}
\def\mI{{\mathbb I}}
\def\mK{{\mathbb K}}
\def\mL{{\mathbb L}}
\def\mM{{\mathbb M}}
\def\mN{{\mathbb N}}
\def\mP{{\mathbb P}}
\def\mQ{{\mathbb Q}}
\def\mR{{\mathbb R}}
\def\mZ{{\mathbb Z}}
\def\bP{{\mathbf P}}
\def\bE{{\mathbf E}}
\def\1{{\mathbf{1}}}
\def\sE{{\mathscr E}}
\def\sF{{\mathscr F}}
\def\sI{{\mathscr I}}
\def\sL{{\mathscr L}}
\def\sM{{\mathscr M}}
\def\geq{\geqslant}
\def\leq{\leqslant}
\def\div{\mathord{{\rm div}}}
\def\eps{\varepsilon}
\def\e{\mathrm{e}}
\def\p{\partial}
\def\[{{\Big[}}
\def\]{{\Big]}}
\def\<{{\langle}}
\def\>{{\rangle}}
\def\({{\Big(}}
\def\){{\Big)}}
\def\bx{{\mathbf{x}}}
\def\dif{{\mathord{{\rm d}}}}
\def\no{\nonumber}
\def\={&\!\!=\!\!&}
\def\bt{\begin{theorem}}
\def\et{\end{theorem}}
\def\bl{\begin{lemma}}
\def\el{\end{lemma}}
\def\br{\begin{remark}}
\def\er{\end{remark}}
\def\bx{\begin{Examples}}
\def\ex{\end{Examples}}
\def\bd{\begin{definition}}
\def\ed{\end{definition}}
\def\bp{\begin{proposition}}
\def\ep{\end{proposition}}
\def\bc{\begin{corollary}}
\def\ec{\end{corollary}}
\def\geq{\geqslant}
\def\leq{\leqslant}
\def\div{\mathord{{\rm div}}}
\def\bK{{\mathbf K}}
\def\bP{{\mathbf P}}
\def\bS{{\mathbf S}}
\def\<{\langle} \def\>{\rangle}
 \def\beq{\begin{equation}}  
\def\e{\text{\rm{e}}}
\begin{document}

\title{Well-posedness of distribution dependent SDEs with singular drifts}
\date{}
\author{Michael R\"ockner and Xicheng Zhang}

\address{Michael R\"ockner:
Fakult\"at f\"ur Mathematik, Universit\"at Bielefeld,
33615, Bielefeld, Germany\\
Email: roeckner@math.uni-bielefeld.de
 }

\address{Xicheng Zhang:
School of Mathematics and Statistics, Wuhan University,
Wuhan, Hubei 430072, P.R.China\\
Email: XichengZhang@gmail.com
 }

\thanks{
This work is supported by NNSFC grant of China (No. 11731009) and the DFG through the CRC 1283 
``Taming uncertainty and profiting from randomness and low regularity in analysis, stochastics and their applications''.
}

\begin{abstract}
Consider the following distribution dependent SDE:
$$
{\mathrm d} X_t=\sigma_t(X_t,\mu_{X_t}){\mathrm d} W_t+b_t(X_t,\mu_{X_t}){\mathrm d} t,
$$
where $\mu_{X_t}$ stands for the distribution of $X_t$. In this paper for non-degenerate $\sigma$, 
we show the strong well-posedness of the above SDE under some 
integrability assumptions in the spatial variable and Lipschitz continuity in $\mu$ about $b$ and $\sigma$.
In particular, we extend the results of Krylov-R\"ockner \cite{Kr-Ro} to the distribution dependent case.

\bigskip
\noindent 
\textbf{Keywords}: 
Distribution dependent SDEs, McKean-Vlasov system, Zvonkin's transformation, Singular drifts, Superposition principle\\

\noindent
 {\bf AMS 2010 Mathematics Subject Classification:}  Primary: 60H10, 35K55.
\end{abstract}

\maketitle \rm

\section{Introduction}

Let $\cP(\mR^d)$ be the space of all probability measures over $(\mR^d,\cB(\mR^d))$, which is endowed with the weak convergence topology.
Consider the following distribution dependent stochastic differential equation (abbreviated as DDSDEs):
\begin{align}\label{SDE}
\dif X_t=b_t(X_t,\mu_{X_t})\dif t+\sigma_t(X_t,\mu_{X_t})\dif W_t,
\end{align}
where $b:\mR_+\times\mR^d\times\cP(\mR^d)\to\mR^d$
and $\sigma:\mR_+\times\mR^d\times\cP(\mR^d)\to\mR^d\otimes\mR^d$ are two Borel measurable functions,
$W$ is a $d$-dimensional standard Brownian motion on some filtered probability space
$(\Omega,\sF,(\sF_t)_{t\geq 0}, \bP)$, and $\mu_{X_t}:=\bP\circ X^{-1}_t$ is the time marginal of $X_t$ at time $t$,
By It\^o's formula, it is easy to see that $\mu_{X_t}$ satisfies the following non-linear Fokker-Planck equation (abbreviated as FPE) in the distributional sense:
\begin{align}\label{Non}
\p_t\mu_{X_t}=(\sL_t^{\sigma^X})^*\mu_{X_t}+\div(b^X_t\mu_{X_t}),
\end{align}
where $\sigma^X_t(x):=\sigma_t(x,\mu_{X_t})$, $b^X_t(x):=b_t(x,\mu_{X_t})$, and $(\sL_t^{\sigma^X})^*$ is the adjoint operator
of the following second order partial differential operator
\begin{align}\label{ET2}
\sL^{\sigma^X}_t f(x):=\frac{1}{2}\sum_{i,j,k=1}^d(\sigma^{ik}_t\sigma^{jk}_t)(x,\mu_{X_t})\p_i\p_jf(x).
\end{align}
We note that if 
$$
\sigma^X_t(x)=\int_{\mR^d}\sigma_t(x,y)\mu_{X_t}(\dif y),\ \ b^X_t(x)=\int_{\mR^d}b_t(x,y)\mu_{X_t}(\dif y),
$$
then DDSDE \eqref{SDE} is also called mean-field SDE or McKean-Vlasov SDE in the literature,
which naturally appears in the studies of interacting particle systems and mean-field games (see \cite{Ka,Mc,Sz,Ca-De, Ca-Gv-Pa-Sc}, in particular, \cite{Ca-De1}
 and references therein).

\medskip

Up to now, there are numerous papers devoted to the study of this type of nonlinear FPEs and DDSDE \eqref{SDE}.
In \cite{Fu}, Funaki showed the existence of martingale solutions for \eqref{SDE} under broad conditions of Lyapunov's type 
and also the uniqueness under global Lipschitz assumptions. His method is based on a suitable time discretization. Thus, the well-posedness of FPE \eqref{Non}
is also obtained. More recently, under some one-side Lipschitz assumptions, Wang \cite{Wa} showed the strong well-posedness and some functional inequalities
to DDSDE \eqref{SDE}. In \cite{Ha-Si-Sz}, Hammersley, Sitsa and Szpruch proved the existence of weak solutions to SDE \eqref{SDE} on a domain $D\subset\mR^d$
with continuous and unbounded coefficients under Lyapunov-type conditions. Moreover, uniqueness is also obtained under some functional Lyapunov conditions.
Notice that all the above results require  the continuity of coefficients. In \cite{Ch}, Chiang obtained the existence of weak solutions for
time-independent SDE \eqref{SDE} with drifts that have some discontinuities.
When the diffusion matrix is uniformly non-degenerate and $b,\sigma$ are only measurable and of at most linear growth, 
by using the classical Krylov estimates, Mishura and Veretennikov \cite{Mi-Ve} showed the existence of weak solutions. The uniqueness is also proved when $\sigma$
does not depend on $\mu$ and is Lipschitz continuous in $x$ and $b$ is Lipschitz continuous with respect to $\mu$ with Lipschitz constant linearly depending on $x$.
It should be noted that by Schauder's fixed point theorem and Girsanov's theorem, Li and Min \cite{Li-Mi} also obtained the existence and uniqueness of weak solutions 
when $b$ is bounded measurable and $\sigma$ is nondegenerate and Lipschitz continuous.
On the other hand, by a purely analytic argument, Manita and Shaposhnikov \cite{Ma-Sh}
and Manita, Romanov and Shaposhnikov \cite{Ma-Ro-Sh} showed the existence and uniqueness of solutions to the nonlinear FPE \eqref{Non} under quite general assumptions.
As observed in \cite{Ba-Ro}, 
by a result of Trevisan \cite{Tr} (see Theorem \ref{Th23} below), one in fact can obtain the well-posedness of DDSDE \eqref{SDE} from \cite{Ma-Sh} and \cite{Ma-Ro-Sh}.
In \cite{Ba-Ro}, a technique is developed to prove weak existence of solutions to \eqref{SDE} by first solving \eqref{Non} which works also for coefficients whose dependence
on $\mu_{X_t}$ is of ``Nemytskii-type'', i.e., are not continuous in $\mu_{X_t}$ in the weak topology.

\medskip

In this work we are interested in extending Krylov-R\"ockner's result \cite{Kr-Ro}
to the singular distribution dependent case, that is not covered by all of the above results. 
More precisely, we want to show the well-posedness of the following DDSDE:
\begin{align}\label{SDE1}
\dif X_t=\left(\int_{\mR^d}b_t(X_t,y)\mu_{X_t}(\dif y)\right)\dif t+\sqrt{2}\dif W_t,
\end{align}
where $b:\mR_+\times\mR^d\times\mR^d\to\mR^d$ is a Borel measurable function and satisfies
\begin{enumerate}[{\bf (H$^b$)}]
\item $|b_t(x,y)|\leq h_t(x-y)$ for some $h\in L^q_{loc}(\mR_+; \widetilde L^p(\mR^d))$, where $p,q\in(2,\infty)$ satisfy $\frac{d}{p}+\frac{2}{q}<1$, 
and $\widetilde L^p(\mR^d)$ is the localized $L^p$-space defined by \eqref{Ck} below.
\end{enumerate}
Here the advantage of using the localized space $\widetilde L^p(\mR^d)$ is that for any $1\leq p\leq p'\leq\infty$,
$$
L^\infty(\mR^d)+L^{p'}(\mR^d)\subset\widetilde L^{p'}(\mR^d)\subset \widetilde L^p(\mR^d)\subset_{p>d}\mK_{d-1},
$$
where $\mK_{d-1}$ is the usual Kato's class defined by
$$
\mK_{d-1}:=\left\{f:\lim_{\eps\to 0}\sup_{x\in\mR^d}\int_{|x-y|\leq\eps}|x-y|^{1-d}f(y)\dif y=0\right\}.
$$
We note that the above DDSDE is not covered by Huang and Wang's recent results \cite{Hu-Wa} 
since $\mu\mapsto\int_{\mR^d}b_t(x,y)\mu(\dif y)$ is not weakly continuous. In fact, if we let 
\begin{align}\label{BB}
B_t(x,\mu):=\int_{\mR^d}b_t(x,y)\mu(\dif y),\ \mu\in\cP(\mR^d),
\end{align}
then by $|b_t(x,y)|\leq h_t(x-y)$, we only have
\begin{align}\label{TTV}
\nor B_t(\cdot,\mu)-B_t(\cdot,\mu')\nor_p\leq\nor h_t\nor_p\|\mu-\mu'\|_{TV},
\end{align}
where $\|\cdot\|_{TV}$ is the total variation distance, and $\nor\cdot\nor_p$ is defined by \eqref{Ck} below.

\medskip

Throughout this paper we assume $d\geq 2$. One of the main results of this paper is stated as follows (but see also section 4 for corresponding results when 
the diffusion matrix $\sigma$ is non-degenerate, but not constant):
\bt\label{Th11}
Under {\bf (H$^b$)}, for any $\beta>2$ and  initial random variable $X_0$ with finite $\beta$-order moment, there is a unique strong solution to SDE \eqref{SDE1}.
Moreover, the following assertions hold:
\begin{enumerate}[{\rm (i)}]
\item The time marginal law $\mu_t$ of $X_t$ uniquely solves the following nonlinear FPE in the distributional sense:
\begin{align}\label{FK}
\p_t\mu_t=\Delta\mu_t+\div\left(\mu_t(b_t(x,\cdot))\mu_t\right),\ \ 
\lim_{t\downarrow 0}\mu_t(\dif y)=\bP\circ X^{-1}_0(\dif y)
\end{align}
in the class of all measures such that $t\mapsto \mu_t$ is weakly continuous and
$$
\int^T_0\!\!\int_{\mR^d}\!\int_{\mR^d}|b_t(x,y)|\mu_t(\dif y)\mu_t(\dif x)\dif t<\infty,\  \ \forall T>0.
$$
\item $\mu_t(\dif y)=\rho^X_t(y)\dif y$ and $(t,y)\mapsto\rho^X_t(y)$ is continuous on $(0,\infty)\times\mR^d$ and 
satisfies the following two-sided estimate: for any $T>0$, there are constants $\gamma_0, c_0\geq 1$
such that for all $t\in(0,T]$ and $y\in\mR^d$,
$$
c_0^{-1}P_{t/\gamma_0}\mu_0(y)\leq \rho^X_t(y)\leq c_0P_{\gamma_0 t}\mu_0(y),
$$
where $P_t\mu_0(y):=(2\pi t)^{-d/2}\int_{\mR^d}\e^{-|x-y|^2/(2t)}\mu_0(\dif x)$ is the Gaussian heat semigroup.

\item If $\div b=0$, then for each $t>0$, $\rho^X_t(\cdot)\in C^1(\mR^d)$ and we have the following gradient estimate:
for any $T>0$, there are constants $\gamma_1, c_1\geq 1$ such that for all $t\in(0,T]$ and $y\in\mR^d$,
$$
|\nabla\rho^X_t(y)|\leq c_1 t^{-1/2}P_{\gamma_1 t}\mu_0(y).
$$
\end{enumerate}
\et
\bx
Let $b_t(x,y):=a_t(x,y)/|x-y|^\alpha$ for some $\alpha\in[1,2)$, where $a_t(x,y):\mR_+\times\mR^d\times\mR^d\to\mR^d$ satisfies that for some $\kappa>0$,
$$
|a_t(x,y)|\leq \kappa |x-y|. 
$$
Then it is easy to see that $b$  satisfies {\bf (H$^b$)} for some $p>d$ and $q=\infty$.
\ex

\br
Here an open question is to show the following propagation of chaos (see \cite{Sz}): Given $N\in\mN$, let $X^{N,j}, j=1,\cdots, N$ solve the following SDEs
$$
\dif X^{N,j}_t=\frac{1}{N}\sum_{i=1}^Nb_t(X^{N,j}_t,X^{N,i}_t)\dif t+\sqrt{2}\dif W^j_t,\ \ j=1,\cdots, N,
$$
where $W^j_\cdot, j=1,\cdots,N$ are $N$-independent $d$-dimensional Brownian motion. Let $X$ be the unique solution of SDE \eqref{SDE1} in Theorem \ref{Th11}.
Is it possible to show that 
$$
X^{N,1}_\cdot\to X_\cdot\mbox{ in distribution as $N\to\infty$?}
$$
Even for bounded measurable $b$, the above question seems to be still open.
\er

To show the existence of a solution to DDSDE \eqref{SDE1}, by the well-known result for bounded measurable drift $b$ obtained in \cite{Mi-Ve} 
(see also \cite{Li-Mi}and \cite{Zh1}), for each $n\in\mN$, there is a solution to the following distribution dependent SDE:
\begin{align}\label{SDE11}
\dif X^n_t=\left(\int_{\mR^d}b^n_t(X^n_t,y)\mu_{X^{n}_t}(\dif y)\right)\dif t+\sqrt{2}\dif W_t,\ \ X^n_0=X_0,
\end{align}
where $b^n_t(x,y):=(-n)\vee b_t(x,y)\wedge n.$
By the well-known results in \cite{Xi-Xi-Zh-Zh}, one can show the following uniform Krylov estimate: For any $p_1,q_1\in(1,\infty)$ with $\frac{d}{p_1}+\frac{2}{q_1}<2$ and $T>0$,
there is a constant $C>0$ such that for any $f\in\widetilde\mL^{p_1}_{q_1}(T)$,
\begin{align}\label{Kr0}
\sup_n\bE\left(\int^T_0 f_t(X^n_t)\dif t\right)\leq C_T\nor f\nor_{\widetilde\mL^{p_1}_{q_1}(T)}.
\end{align}
By this estimate and Zvonkin's technique, we can further show the tightness of $X^n_\cdot$ in the space of continuous functions. 
However, since $b$ is allowed to be singular,
it is not obvious by taking the limit $n\to\infty$ to obtain the existence of a solution. 
Indeed, one needs the following Krylov estimate: for suitable $p_0,q_0\in(1,\infty)$ and any $f:\mR_+\times\mR^d\times\mR^d\to\mR_+$,
$$
\sup_n\bE\left(\int^t_0f_s(X^n_s,\tilde X^{n}_s)\dif s\right)\leq \nor f\nor_{\widetilde\mL^{p_0}_{q_0}(T)},
$$
where $\tilde X^{n}_\cdot$ is an independent copy of $X^n$. When $b$ is bounded measurable, such an estimate is easy to get 
by considering $(X^n, \tilde X^{n})$ as an $\mR^{2d}$-dimensional
It\^o process and using the classical Krylov estimates (see \cite{Mi-Ve}).
While for singular $b$, such simple observation fails in order to obtain best integrability index $p$. 
We overcome this difficulty by a simple duality argument (see Lemma \ref{Le27} below). 
Moreover, concerning  the uniqueness, under assumption \eqref{TTV}, we shall employ Girsanov's transformation as usual.

\medskip

This paper is organized as follows: In Section 2, we prepare some well-known results and tools for later use.
In Section 3, we show the existence of weak and strong solutions to DDSDE \eqref{SDE} when the drift satisfies {\bf (H$^b$)},
and the diffusion coefficient is uniformly nondegenerate and bounded H\"older continuous.
In Section 4, we prove the uniqueness of weak and strong solutions to \eqref{SDE} in two cases: the coefficients $b$ and $\sigma$ are Lipschitz continuous
in the third variable with respect to the Wasserstein metric; drift $b$ is Lipschitz continuous in the third variable with respect to the total variation distance
and the diffusion coefficient does not depend on the distribution.
In Section 5, we present some applications to nonlinear FPE  \eqref{Non} and prove Theorem \ref{Th11}.

\medskip

Finally we collect some frequently used notations and conventions for later use.
\begin{itemize}
\item For $\theta>0$, $\cP_\theta(\mR^d):=\left\{\mu\in\cP(\mR^d): \int_{\mR^d}|x|^\theta\mu(\dif x)<\infty\right\}$.
\item For $R>0$, set $B_R:=\{x\in\mR^d: |x|<R\}$.
\item For a function $f:\mR^d\to\mR$, $\cM_R f(x):=\sup_{r\in(0,R)}\frac{1}{|B_r|}\int_{B_r}|f|(x+y)\dif y$.
\item Let $\bS_{\rm toch}$ be the set of all measurable stochastic processes on $(\Omega,\sF,\bP)$ that are stochastically continuous.
\item Let $b:\mR_+\times\mR^d\times\cP(\mR^d)\to\mR^d$ be a measurable vector field. For $X\in\bS_{\rm toch}$, define
\begin{align}\label{No1}
b^X_t(x):=b_t(x,\mu_{X_t}),\ \ \mu_{X_t}:=\bP\circ X_t^{-1}.
\end{align}
\item For a signed measure $\mu$, we denote by $\|\mu\|_{TV}:=\sup_{\|f\|_\infty\leq 1}|\mu(f)|$  the total variation of $\mu$.
\item For $j=1,2$, we introduce the index set $\sI_j$ as following:
\begin{align}\label{In}
\sI_j:=\Big\{(p,q)\in(1,\infty): \tfrac{d}{p}+\tfrac{2}{p}<j\Big\}.
\end{align}
\item For a matrix $\sigma$, we use $\|\sigma\|_{HS}$ to denote the Hilbert-Schmidt norm of $\sigma$.
\item We use $A\lesssim B$ (resp. $\asymp$) to denote $A\leq CB$ (resp. $C^{-1}B\leq A\leq CB$) for some unimportant constant $C\geq 1$, 
whose dependence on the parameters can be traced from the context. 
\end{itemize}

\section{Preliminaries}

In this section we recall some well-known results. We first introduce the following spaces and notations for later use.
For $(\alpha,p)\in\mR_+\times(1,\infty)$,  the usual Bessel potential space $H^{\alpha,p}$ is defined by
$$
H^{\alpha,p}:=\big\{f\in L^1_{loc}(\mR^d): \|f\|_{\alpha,p}:=\|(\mI-\Delta)^{\alpha/2}f\|_p<\infty\big\},
$$
where $\|\cdot\|_p$ is the usual $L^p$-norm, and $(\mI-\Delta)^{\alpha/2}f$ is defined by Fourier  transform
$$
(\mI-\Delta)^{\alpha/2}f:=\cF^{-1}\big((1+|\cdot|^2)^{\alpha/2}\cF f\big).
$$
Notice that for $n\in\mN$, an equivalent norm in $H^{n,p}$ is given by
$$
\|f\|_{n,p}=\|f\|_p+\|\nabla^n f\|_{p}.
$$
For $T>S\geq 0$, $p,q\in(1,\infty)$ and $\alpha\in\mR_+$, we introduce space-time function spaces
$$
\mL^p_q(S,T):=L^q\big([S,T];L^p\big),\ \  \mH^{\alpha,p}_q(S,T):=L^q\big([S,T];H^{\alpha,p}\big).
$$
Let $\chi\in C^\infty_c(\mR^d)$ be a smooth function with $\chi(x)=1$ for $|x|\leq 1$ and $\chi(x)=0$ for $|x|>2$.
For $r>0$ and $z\in\mR^d$, define
\begin{align}\label{CHI}
\chi^z_r(x):=\chi((x-z)/r).
\end{align}
Fix $r>0$. We introduce the following localized $H^{\alpha,p}$-space:
\begin{align}\label{Ck}
\widetilde H^{\alpha,p}:=\Big\{f\in H^{\alpha,p}_{loc}(\mR^d),\nor f\nor_{\alpha,p}:=\sup_z\|f\chi^z_r\|_{\alpha,p}<\infty\Big\},
\end{align}
and  the localized space-time function space $\widetilde\mH^{\alpha,p}_q(S,T)$ with norm
\begin{align}\label{GG1}
\nor f\nor_{\widetilde\mH^{\alpha,p}_q(S,T)}:=\sup_{z\in\mR^d}\|\chi^z_r f\|_{\mH^{\alpha,p}_q(S,T)}<\infty.
\end{align}
For simplicity we shall write
$$
\widetilde\mH^{\alpha,p}_q(T):=\widetilde\mH^{\alpha,p}_q(0,T),\ \ \widetilde\mL^{p}_q(T):=\widetilde\mH^{0,p}_q(0,T),
$$
and
$$
\widetilde\mH^{\alpha,p}_q:=\cap_{T>0}\widetilde\mH^{\alpha,p}_q(T),\ \ \widetilde\mL^{p}_q:=\cap_{T>0}\widetilde\mL^{p}_q(T).
$$

The following lemma list some easy properties of $\widetilde\mH^{\alpha,p}_q$ (see \cite{Zh-Zh2} and \cite{Xi-Xi-Zh-Zh}).
\bp\label{Pr41}
Let $p,q\in(1,\infty)$, $\alpha\in\mR_+$ and $T>0$.
\begin{enumerate}[{\rm(i)}]
\item For $r\not=r'>0$, there is a $C=C(d,\alpha,r,r',p,q)\geq 1$ such that
\begin{align}\label{GT1}
C^{-1}\sup_{z}\|f\chi^{z}_{r'}\|_{\mH^{\alpha,p}_q(T)}\leq \sup_{z}\|f\chi^{z}_r\|_{\mH^{\alpha,p}_q(T)}\leq C \sup_{z}\|f\chi^{z}_{r'}\|_{\mH^{\alpha,p}_q(T)}.
\end{align}
In other words, the definition of $\widetilde \mH^{\alpha,p}_q$ does not depend on the choice of $r$.
\item  Let $\alpha>0$ , $p,q\in[1,\infty)$ and $p'\in[p,\tfrac{pd}{d-p\alpha}\1_{p\alpha<d}+\infty\cdot\1_{p\alpha>d}]$. 
It holds that for some $C=C(d,\alpha,p,p')>0$,
\begin{align}\label{Sob}
\nor f\nor_{\widetilde\mL^{p'}_q(T)}\leq C\nor f\nor_{\widetilde\mH^{\alpha,p}_q(T)}.
\end{align}
\item For any $k\in\mN$, there is a constant $C=C(d,k,\alpha,p,q)\geq 1$ such that
$$
C^{-1} \nor f\nor_{\widetilde\mH^{\alpha+k,p}_q(T)}\leq \nor f\nor_{\widetilde\mH^{\alpha,p}_q(T)}+\nor \nabla^k f\nor_{\widetilde\mH^{\alpha,p}_q(T)}\leq 
C \nor f\nor_{\widetilde\mH^{\alpha+k,p}_q(T)}.
$$
\item Let $(\rho_\eps)_{\eps\in(0,1)}$ be a family of mollifiers in $\mR^d$ and $f_\eps(t,x):=f(t,\cdot)*\rho_\eps(x)$. 
For any $f\in\widetilde \mH^{\alpha,p}_q$, it holds that $f_\eps\in L^q_{loc}(\mR; C^\infty_b(\mR^d))$ and
for some $C=C(d,\alpha,p,q)>0$,
\begin{align}\label{GT2}
\nor f_\eps\nor_{\widetilde\mH^{\alpha,p}_q(T)}\leq C\nor f\nor_{\widetilde\mH^{\alpha,p}_q(T)},\ \forall \eps\in(0,1),
\end{align}
and for any $\varphi\in C^\infty_c(\mR^{d})$,
\begin{align}\label{GT3}
\lim_{\eps\to0}\|(f_\eps-f)\varphi\|_{\mH^{\alpha,p}_q(T)}=0.
\end{align}
\item For $r=p/(p-1)$ and $s=q/(q-1)$,
\begin{align}
&\nor f\nor_{\widetilde\mL^p_q(T)}\asymp \nor f\nor'_{\widetilde\mL^p_q(T)}
=\sup_{ \nor g\nor^*_{\widetilde\mL^{r}_{s}(T)}\leq 1}\left|\int^T_0\!\!\!\int_{\mR^d} f_t(x)g_t(x)\dif x\dif t\right|,\label{KH01}\\
&\qquad\mbox{\rm and }\nor g\nor^*_{\widetilde\mL^{r}_{s}(T)}
=\sup_{\nor f\nor'_{\widetilde\mL^p_q(T)} \leq 1}\left|\int^T_0\!\!\!\int_{\mR^d} f_t(x)g_t(x)\dif x\dif t\right|,\label{KH02}
\end{align}
where $\nor f\nor'_{\widetilde\mL^p_q(T)}:=\sup_{z\in\mZ^d}\|\1_{Q_z} f\|_{\mL^p_q(T)}$ and $\nor g\nor^*_{\widetilde\mL^{r}_{s}(T)}:=\sum_{z\in\mZ^d}\|\1_{Q_z} g\|_{\mL^{r}_{s}(T)}$,
\begin{align}\label{QQZ}
Q_z:=\Pi_{i=1}^d(z_i, z_i+1],\quad z=(z_1,\cdots,z_d)\in\mZ^d.
\end{align}
\end{enumerate}
\ep
\begin{proof}
The first four conclusions can be found in \cite[Proposition 4.1]{Zh-Zh2}.
We only prove (v). The equivalence between $\nor f\nor_{\widetilde\mL^p_q(T)}$ and $ \nor f\nor'_{\widetilde\mL^p_q(T)}$ is obvious by definition. 
Concerning the others,  we note that by H\"older's inequality,
\begin{align}\label{KH3}
\begin{split}
&\int^T_0\!\!\!\int_{\mR^d} f_t(x)g_t(x)\dif x\dif t=\sum_{z\in\mZ^d}\int^T_0\!\!\!\int_{\mR^d}\1_{Q_z}(x) f_t(x)g_t(x)\dif x\dif t\\
&\qquad\leq \sum_{z\in\mZ^d}\|\1_{Q_z}f\|_{\mL^p_q(T)}\|\1_{Q_z} g\|_{\mL^{r}_{s}(T)}\leq\nor f\nor'_{\widetilde\mL^p_q(T)}\nor g\nor^*_{\widetilde\mL^{r}_{s}(T)}.
\end{split}
\end{align}
On the other hand, assume that $z_n$ is a sequence in $\mZ^d$ so that for $Q_n:=Q_{z_n}$,
\begin{align}
\lim_{n\to\infty}\|\1_{Q_{n}} f\|_{\mL^p_q(T)}=\nor f\nor'_{\widetilde\mL^p_q(T)}.\label{KH33}
\end{align}
If we take
$$
g_t(x):=\frac{\1_{Q_n}(x)  |f_t(x)|^{p-1}}{\|\1_{Q_n}f_t\|^{p-q}_{p}}\left(\int^T_0\|\1_{Q_n} f_t\|^{q}_{p}\dif t\right)^{1/{q}-1}
$$
with the convention $0/0=0$, then by easy calculations, we have
$\nor g\nor^*_{\widetilde\mL^{r}_{s}(T)}=1$ and
\begin{align*}
\int^T_0\!\!\!\int_{\mR^d}f_t(x)g_t(x)\dif x\dif t=\left(\int^T_0\|\1_{Q_n}f_t\|^{q}_{p}\dif t\right)^{1/{q}}=\|\1_{Q_n} f\|_{\mL^p_q(T)},
\end{align*}
which together with \eqref{KH3} and \eqref{KH33} yields \eqref{KH01}.
Similarly,  if we take
$$
f_t(x):=\sum_{z\in\mZ^d}\frac{\1_{Q_z}(x)  |g_t(x)|^{r-1}}{\|\1_{Q_z}g_t\|^{r-s}_{r}}\cdot \left(\int^T_0\|\1_{Q_z} g_t\|^{s}_{r}\dif t\right)^{1/{s}-1},
$$
then $\nor f\nor'_{\widetilde\mL^p_q(T)}=1$ and
\begin{align*}
\int^T_0\!\!\!\int_{\mR^d}f_t(x)g_t(x)\dif x\dif t=\sum_{z\in\mZ^d}\left(\int^T_0\|\1_{Q_z}g_t\|^{s}_{r}\dif t\right)^{1/{s}}=\nor g\nor^*_{\widetilde\mL^{r}_{s}(T)},
\end{align*}
which together with \eqref{KH3} yields \eqref{KH02}.
\end{proof}

We now recall the following result about $L^q(L^p)$-solvability of PDE (see \cite{Xi-Xi-Zh-Zh}).
\bt\label{pde}
Let $(p,q)\in\sI_1$ (see \eqref{In}) and $T>0$.
Assume that $\sigma_t(x,\mu)=\sigma_t(x)$ and $b_t(x,\mu)=b_t(x)$ are independent of $\mu$, 
and satisfy that for some $c_0\geq 1$, $\gamma\in(0,1]$ and for all $t\geq 0$, $x,y,\xi\in\mR^d$,
\begin{align}\label{SI}
c_0^{-1}|\xi|\leq |\sigma_t(x)\xi|\leq c_0|\xi|,\ \ 
\|\sigma_t(x)-\sigma_t(y)\|_{HS}\leq c_0|x-y|^\gamma,
\end{align}
and $\nor b\nor_{\widetilde\mL^{p}_{q}(T)}\leq\kappa_0$ for some $\kappa_0>0$,
Then for any $\lambda\geq 1$ and $f\in \widetilde\mL^p_q(T)$, there exists a unique solution $u\in \widetilde\mH^{2,p}_q(T)$ 
to the following backward parabolic equation:
\begin{align}
\p_tu+(\sL^\sigma_t-\lambda)u+b\cdot\nabla u=f,\quad u(T,x)=0.\label{pide55}
\end{align}
Moreover, letting $\Theta:=(\gamma,c_0,d,p,q,\kappa_0, T)$,
we have the following:
\begin{enumerate}[{\rm (i)}]
\item For any $\alpha\in[0,2-\frac{2}{q})$,
there is a $c_1=c_1(\alpha,\Theta)>0$ such that for all $\lambda\geq 1$,
\begin{align}\label{Max}
\lambda^{1-\frac{\alpha}{2}-\frac{1}{q}}\nor u\nor_{\widetilde\mH^{\alpha,p}_\infty(T)}
+\nor u\nor_{\widetilde\mH^{2,p}_q(T)}\leq C\nor f\nor_{\widetilde\mL^p_q(T)}.
\end{align}
\item Let $(\sigma',b',f')$ be another set of coefficients satisfying the same assumptions as $(\sigma,b,f)$ with the same parameters $(\gamma,c_0,\kappa_0)$. 
Let $u'$ be the solution of \eqref{pide55}
corresponding to $(\sigma',b',f')$.
For any $\alpha\in[0,2-\frac{2}{q})$,
there is a constant $c_2=c_2(\alpha,\Theta)>0$ such that  for all $\lambda\geq 1$,
\begin{align}\label{es10}
\begin{split}
&\lambda^{1-\frac{\alpha}{2}-\frac{1}{q}}\nor u-u'\nor_{\widetilde\mH^{\alpha,p}_\infty(T)}\leq c_2\nor f-f'\nor_{\widetilde\mL^p_q(T)}\\
&\quad+ c_2\|f\|_{\widetilde\mL^p_q(T)}\big(\|\sigma-\sigma'\|_{ \mL^\infty(T)}+\nor b-b'\nor_{\widetilde\mL^{p}_{q}(T)}\big).   
\end{split}
\end{align}
\end{enumerate}
\et
\begin{proof}
The existence and uniqueness of $u\in\widetilde\mH^{2,p}_q(T)$ as well as the first conclusion are proved in \cite[Theorem 3.1]{Xi-Xi-Zh-Zh}. We only show (ii).
Let $w=u'-u$. Then 
$$
\p_t w+(\sL^{\sigma'}_t-\lambda) w+b'\cdot\nabla w=(\sL^{\sigma}_t-\sL^{\sigma'}_t)u+(b-b')\cdot\nabla u+f'-f.
$$
By \eqref{Max} and H\"older's inequality we have
\begin{align*}
&\lambda^{1-\frac{\alpha}{2}-\frac{1}{q}}\nor w\nor_{\widetilde\mH^{\alpha,p}_\infty(T)}
\lesssim \nor(\sL^{\sigma}_t-\sL^{\sigma'}_t)u+(b-b')\cdot\nabla u+f'-f\nor_{\widetilde\mL^p_q(T)}\\
&\qquad\lesssim \|\sigma'-\sigma\|_{\mL^\infty(T)}\nor\nabla^2u\nor_{\widetilde\mL^p_q(T)}+\nor b'-b\nor_{\widetilde\mL^{p}_{q}(T)}
\cdot\|\nabla u\|_{\mL^\infty(T)}+\nor f'-f\nor_{\widetilde\mL^p_q(T)}.
\end{align*}
Estimate \eqref{es10} now follows by Sobolev's embedding \eqref{Sob} due to $\frac{d}{p}+\frac{2}{q}<1$ and \eqref{Max}. 
\end{proof}

\br
It should be noted that if $b$ is bounded measurable, then the assertions in Theorem \ref{pde} holds for all $p,q\in(1,\infty)$.
\er

The following stochastic Gronwall inequality for continuous martingales was proved by Scheutzow \cite{Sc}, and
for general discontinuous martingales in \cite{Xi-Zh}.

\bl[Stochastic Gronwall's inequality]\label{im}
Let $\xi(t)$ and $\eta(t)$ be two nonnegative c\`adl\`ag $\sF_t$-adapted processes, 
$A_t$ a continuous nondecreasing $\sF_t$-adapted process with $A_0=0$, $M_t$ a  local martingale with $M_0=0$. Suppose that
\begin{align}\label{Gron}
\xi(t)\leq\eta(t)+\int^t_0\xi(s)\dif A_s+M_t,\ \forall t\geq 0.
\end{align}
Then for any $0<q<p<1$ and $\tau>0$, we have
\begin{align}
\big[\bE(\xi(\tau)^*)^{q}\big]^{1/q}\leq \Big(\tfrac{p}{p-q}\Big)^{1/q}\Big(\bE \e^{pA_{\tau}/(1-p)}\Big)^{(1-p)/p}\bE\big(\eta(\tau)^*\big),  \label{gron}
\end{align}
where $\xi(t)^*:=\sup_{s\in[0,t]}\xi(s)$.
\el

We also recall the following result about maximal functions (see \cite[Lemma 2.1]{Xi-Xi-Zh-Zh}).
\bl\label{Le2}
\begin{enumerate}[{\rm (i)}]
\item For any $R>0$, there exists a constant $C=C(d,R)$ such that for any $f\in L^\infty(\mR^d)$ with $\nabla f\in L^1_{loc}(\mR^d)$
and Lebesgue-almost all $x,y\in \mR^d$,
\begin{align}
|f(x)-f(y)|\leq C |x-y|(\cM_R|\nabla f|(x)+\cM_R|\nabla f|(y)+\|f\|_\infty),\label{ES2}
\end{align}
where $\cM_R$ is defined at the end of the introduction.
\item For any $p>1$ and $R>0$, there is a constant $C=C(R,d,p)$ such that for any $T>0$ and all $f\in\widetilde\mL^p_q(T)$,
\begin{align}\label{GW1}
\nor\cM_R f\nor_{\widetilde\mL^p_q(T)}\leq C\nor f\nor_{\widetilde\mL^p_q(T)}.
\end{align}
\end{enumerate}
\el
We introduce the following notion about Krylov's estimates.
\bd
Let $p,q\in(1,\infty)$ and $T,\kappa>0$. We say a stochastic process $X\in\bS_{\rm toch}$ satisfies Krylov's estimate with index $p,q$ and constant $\kappa$ if for any 
$f\in \widetilde\mL^p_q(T)$,
\begin{align}\label{Kry}
\bE\left(\int^T_0f_t(X_t)\dif t\right)\leq \kappa\nor f\nor_{\widetilde\mL^p_q(T)}.
\end{align}
The set of all such $X$ will be denoted by ${\bf K}^{p,q}_{T,\kappa}$.
\ed
For a space-time function $f_t(x,y):\mR_+\times\mR^d\times\mR^d\to\mR$ and $p_1,p_2,q_0\in[1,\infty]$, we also introduce the norm
$$
\nor f\nor_{\widetilde\mL^{p_1,p_2}_{q_0}(T)}:=\sup_{z,z'\in\mR^d}\left(\int^T_0\left(\int_{Q_{z'}}\|\1_{Q_z}f_t(\cdot,y)\|^{p_2}_{p_1}\dif y\right)^{\frac{q_0}{p_2}}\right)^{\frac{1}{q_0}}.
$$

The following lemma is an easy consequence of Proposition \ref{Pr41} (v).
\bl\label{Le27}
Let $p_1,p_2, q_0,q_1,q_2\in(1,\infty)$ with $\frac{1}{q_1}+\frac{1}{q_2}=1+\frac{1}{q_0}$ and $T,\kappa_1,\kappa_2>0$. 
Let $X\in\bK^{p_1,q_1}_{T,\kappa_1}$ and $Y\in\bK^{p_2,q_2}_{T,\kappa_2}$ be two independent processes.
Then for any $f_t(x,y)\in\widetilde\mL^{p_1,p_2}_{q_0}(T)$,
\begin{align}\label{GQ2}
\bE\left(\int^T_0f_t(X_t, Y_t)\dif t\right)&\leq \kappa_1\kappa_2\nor f\nor_{\widetilde\mL^{p_1,p_2}_{q_0}(T)}.
\end{align}
\el
\begin{proof}
Let $Z^1=X$ and $Z^2=Y$. First of all, by Krylov's estimate \eqref{Kry}, for each $i=1,2$, there is a function $\rho^{Z^i}\in\mL^{r_i}_{s_i}(T)$ with $r_i=\frac{p_i}{p_i-1}$, $s_i=\frac{q_i}{q_i-1}$ so that
$$
\int^T_0\!\!\!\int_{\mR^d}f_t(x)\rho^{Z^i}_t(x)\dif x\dif t=\bE\left(\int^T_0f_t(Z^i_t)\dif t\right)\leq \kappa_i\nor f\nor_{\widetilde\mL^{p_i}_{q_i}(T)}\leq \kappa_i\|f\|_{\mL^{p_i}_{q_i}(T)}.
$$
By Proposition \ref{Pr41} (v), we further have
$$
\nor\rho^{Z^i}\nor^*_{\widetilde\mL^{r_i}_{s_i}(T)}:=\sum_{z\in\mZ^d}\|\1_{Q_z}\rho^{Z^i}\|_{\mL^{r_i}_{s_i}(T)}\leq \kappa_i,\ \ i=1,2,
$$
where $Q_z$ is defined by \eqref{QQZ}.
Now by the independence of $X,Y$ and H\"older's inequality, we have 
\begin{align*}
&\bE\left(\int^T_0f_t(X_t, Y_t)\dif t\right)=\int^T_0\!\!\!\int_{\mR^d}\!\int_{\mR^d} f_t(x, y)\rho^X_t(x)\rho^Y_t(y)\dif x\dif y\dif t\\
&\qquad=\sum_{z\in\mZ^d}\sum_{z'\in\mZ^d}\int^T_0\!\!\!\int_{\mR^d}\!\int_{\mR^d}\1_{Q_z}(x)\1_{Q_{z'}}(y) f_t(x, y)\rho^X_t(x)\rho^Y_t(y)\dif x\dif y\dif t\\
&\qquad\leq\sum_{z\in\mZ^d}\sum_{z'\in\mZ^d}\|\1_{Q_z\times Q_{z'}} f\|_{\mL^{p_1,p_2}_{q_0}(T)}\|\1_{Q_z}\rho^X\|_{\mL^{r_1}_{s_1}(T)}\|\1_{Q_{z'}}\rho^Y\|_{\mL^{r_2}_{s_2}(T)}\\
&\qquad\leq \kappa_1\kappa_2\sup_{z, z'\in\mZ^d}\|\1_{Q_z\times Q_{z'}} f\|_{\mL^{p_1,p_2}_{q_0}(T)}=\kappa_1\kappa_2\nor f\nor_{\widetilde\mL^{p_1,p_2}_{q_0}(T)},
\end{align*}
which gives \eqref{GQ2}. The proof is complete.
\end{proof}

Now we  prove the following convergence lemmas, which have independent interest and will be crucial for showing the existence of solutions in Section 3.
\bl\label{Le32}
Let $X^n,Y^n, X, Y\in\bS_{\rm toch}$ be such that for each $t\geq 0$,
$X^n_t$ converges to $X_t$ almost surely and $Y^n_t$ converges to $Y_t$ in distribution.
Let $p,q>1$ and $T,\beta,\kappa>0$. Suppose that $X^n\in\bK^{p,q}_{T,\kappa}$ for each $n\in\mN$,
and for some $C_1>0$,
\begin{align}\label{Kry1}
\sup_n\sup_{t\in[0,T]}\bE|X^n_t|^\beta\leq C_1.
\end{align}
If for each $(t,x)$, $\mu\mapsto b_t(x,\mu)$ is continuous with respect to the weak convergence topology and 
for some $\gamma>1$, $C_2>0$ and all $Z\in\bS_{\rm toch}$,
\begin{align}\label{JW1}
\nor b^Z\nor_{\mL^{\gamma p}_{\gamma q}(T)}\leq C_2,
\end{align}
where $b^Z$ is defined by \eqref{No1}, then
\begin{align}\label{ER4}
\lim_{n\to\infty}\bE\left(\int^T_0|b^{Y_n}_t(X^n_t)-b^Y_t(X_t)|\dif t\right)=0.
\end{align}
\el
\begin{proof}
To prove \eqref{ER4}, it suffices to show the following:
\begin{align}
\lim_{n\to\infty}\bE\left(\int^T_0\left|b^{Y^n}_t(X^n_t)-b^Y_t(X^n_t)\right|\dif t\right)=0,\label{ER1}\\
\lim_{n\to\infty}\bE\left(\int^T_0\left|b^Y_t(X^n_t)-b^Y_t(X_t)\right|\dif t\right)=0.\label{ER2}
\end{align}
We first look at \eqref{ER1}.
Since $\mu_{Y^n_t}$ weakly converges to $\mu_{Y_t}$ for each $t\geq 0$, by the assumption we have
\begin{align}\label{LK1}
b^{Y_n}_t(x)\stackrel{n\to\infty}{\to} b^Y_t(x),\ \ \forall (t,x)\in\mR_+\times\mR^d.
\end{align}
For fixed $R,M>0$, since $X^n\in\bK^{p,q}_{T,\kappa}$ (see \eqref{Kry}), by the definitions we have
\begin{align*}
&\bE\left(\int^T_0\1_{B_R}(X^n_t)\left|b^{Y_n}_t(X^n_t)-b^Y_t(X^n_t)\right|\dif t\right)
\leq \kappa\nor\1_{B_R}(b^{Y_n}-b^Y)\nor_{\widetilde\mL^p_q(T)}\\
&\quad\lesssim\left\|\1_{B_R}(b^{Y_n}-b^Y)\1_{|b^{Y_n}-b^Y|\leq M}\right\|_{\mL^p_q(T)}
+\left\|\1_{B_R}(b^{Y_n}-b^Y)\1_{|b^{Y_n}-b^Y|>M}\right\|_{\mL^p_q(T)}\\
&\quad\leq \left\|\1_{B_R}(b^{Y_n}-b^Y)\1_{|b^{Y_n}-b^Y|\leq M}\right\|_{\mL^p_q(T)}
+\left\|\1_{B_R}|b^{Y_n}-b^Y|^\gamma\right\|_{\mL^p_q(T)}/M^{\gamma-1}.
\end{align*}
By the dominated convergence theorem and \eqref{LK1}, the first term converges to zero as $n\to\infty$ for each $M>0$.
By \eqref{JW1}, the second term converges to zero uniformly in $n$ as $M\to\infty$. Thus, we obtain that for any $R>0$,
\begin{align}\label{Lim1}
\lim_{n\to\infty}\bE\left(\int^T_0\1_{B_R}(X^n_t)\left|b^{Y_n}_t(X^n_t)-b^Y_t(X^n_t)\right|\dif t\right)=0.
\end{align}
On the other hand, by H\"older and Chebyshev's inequalities and \eqref{Kry1}, we have
\begin{align*}
&\bE\left(\int^T_0\1_{B^c_R}(X^n_t)\left|b^{Y_n}_t(X^n_t)-b^Y_t(X^n_t)\right|\dif t\right)\\
&\leq \int^T_0\bP(|X^n_t|>R)^{\frac{\gamma-1}{\gamma}}\left(\bE\left|b^{Y_n}_t(X^n_t)-b^Y_t(X^n_t)\right|^\gamma\right)^{\frac{1}{\gamma}}\dif t\\
&\leq\sup_{t\in[0,T]}\bP(|X^n_t|>R)^{\frac{\gamma-1}{\gamma}} T^{\frac{\gamma-1}{\gamma}}
\left(\int^T_0\bE\left|b^{Y_n}_t(X^n_t)-b^Y_t(X^n_t)\right|^\gamma\dif t\right)^{\frac{1}{\gamma}}\\
&\leq\left(\frac{C_1 T}{R^\beta}\right)^{\frac{\gamma-1}{\gamma}} 
\kappa^{\frac{1}{\gamma}}\nor b^{Y_n}-b^Y\nor_{\mL^{\gamma p}_{\gamma q}(T)}
\stackrel{\eqref{JW1}}{\leq} \left(\frac{C_1 T}{R^\beta}\right)^{\frac{\gamma-1}{\gamma}} \kappa^{\frac{1}{\gamma}}\cdot 2C_2.
\end{align*}
Combining this with \eqref{Lim1}, we obtain \eqref{ER1}.

Next we show \eqref{ER2}. Let $b^{Y,\eps}_t(x):=b^Y_t(\cdot)*\varrho_\eps(x)$ be a mollifying approximation of $b^Y$. 
By Proposition \ref{Pr41} (iv) and \eqref{Kry1}, as above one can derive that
\begin{align}\label{ET3}
\lim_{\eps\to 0}\sup_{n\in\mN\cup\{\infty\}}\bE\left(\int^T_0\left|b^{Y,\eps}_t(X^n_t)-b^Y_t(X^n_t)\right|\dif t\right)=0,
\end{align}
where we have used the convention $X^\infty:=X$.
On the other hand, since by \eqref{Kry},
$$
\sup_n\bE\left(\int^T_0\left|b^{Y,\eps}_t(X^n_t)-b^{Y,\eps}_t(X_t)\right|^\gamma\dif t\right)\leq C\nor b^{Y,\eps}\nor^\gamma_{\widetilde\mL^{\gamma p}_{\gamma q}(T)},
$$
and for fixed $\eps>0$ and any $t>0$, $x\mapsto b^{Y,\eps}_t(x)$ is continuous, by the dominated convergence theorem, we have
$$
\lim_{n\to\infty}\bE\left(\int^T_0\left|b^{Y,\eps}_t(X^n_t)-b^{Y,\eps}_t(X_t)\right|\dif t\right)=0,
$$
which together with \eqref{ET3} yields \eqref{ER2}. 
\end{proof}

There are, of course, many examples where the {\it weak} continuity assumption of $\mu\mapsto b_t(x,\mu)$ in the above lemma
is not satisfied, as in the following interesting case:
\begin{align}\label{LK2}
b_t(x,\mu)=\int_{\mR^d}{\bar b}_t(x,y)\mu(\dif y),
\end{align}
where ${\bar b}:\mR_+\times\mR^d\times\mR^d\to\mR$ is a bounded measurable function.
Obviously the weak continuity of $\mu\mapsto b(t,x,\mu)$ does not hold.
However, in this case we still have the following limiting result.
\bl\label{Le33}
Let $X^n, Y^n, X, Y\in\bS_{\rm toch}$ be such that for each $t\geq 0$, $X^n_t$ converges to $X_t$ almost surely and $Y^n_t$ converges to $Y_t$ in distribution.
Let $p_1,p_2, q_0,q_1,q_2\in(1,\infty)$ with $\frac{1}{q_1}+\frac{1}{q_2}=1+\frac{1}{q_0}$ and $T,\beta,\kappa>0$. 
Suppose that $X^n\in\bK^{p_1,q_1}_{T,\kappa}$ and $Y^n\in\bK^{p_2,q_2}_{T,\kappa}$ for each $n\in\mN$, and that
there is a constant $C_1>0$ such that
\begin{align}\label{LK3}
\sup_n\sup_{t\in[0,T]}\bE\left(|X^n_t|^\beta+|Y^n_t|^\beta\right)\leq C_1.
\end{align}
Let $\gamma>1$. Then for any $\bar b\in \widetilde\mL^{\gamma p_1,\gamma p_2}_{\gamma q_0}(T)$, we have 
\begin{align}\label{ER44}
\lim_{n\to\infty}\bE\left(\int^T_0|b^{Y^n}_t(X^n_t)-b^{Y}_t(X_t)|\dif t\right)=0.
\end{align}\el
\begin{proof}
Let $\mN_\infty:=\mN\cup\{\infty\}$ and $Y^\infty:=Y, X^\infty:=X$. Since $b^{Y^n}$ only depends on the distribution of $Y^n$,
by Skorohod's representation,
without loss of generality we may assume that $(X^n)_{n\in\mN_\infty}$ and $(Y^n)_{n\in\mN_\infty}$ are independent, 
and $(X^n_t,Y^n_t)\to (X_t,Y_t)$ a.e. as $n\to\infty$ for each $t$.
Notice that by the assumptions and \eqref{GQ2},
\begin{align}\label{KG1}
\sup_{n\in\mN_\infty}\bE\left(\int^T_0|\bar b_t(X^n_t, Y^n_t)|^\gamma\dif t\right)
\leq \kappa^2\nor \bar b\nor^\gamma_{\widetilde\mL^{\gamma p_1,\gamma p_2}_{\gamma q_0}(T)}<\infty.
\end{align}
Let $\bar b^\eps_t(x,y)=\bar b_t*\varrho_\eps(x,y)$ be a mollifying approximation of $\bar b$. As in the proof of \eqref{ER1}, we have
\begin{align}\label{QA1}
\lim_{\eps\to 0}\sup_{n\in\mN_\infty}\bE\left(\int^T_0|\bar b^\eps_t(X^n_t, Y^n_t)-\bar b_t(X^n_t, Y^n_t)|\dif t\right)=0.
\end{align}
Thus, to prove \eqref{ER44}, it suffices to show that for fixed $\eps\in(0,1)$,
\begin{align*}
&\lim_{n\to\infty}\bE\left(\int^T_0|{\bar b}^\eps_t(X^n_t,Y^n_t)-{\bar b}^\eps_t(X^n_t,Y_t)|\dif t\right)=0,\\
&\lim_{n\to\infty}\bE\left(\int^T_0|{\bar b}^\eps_t(X^n_t,Y_t)-{\bar b}^\eps_t(X_t,Y_t)|\dif t\right)=0,
\end{align*}
which follows by \eqref{KG1} and the dominated convergence theorem.
\end{proof}

\section{Existence of weak and strong solutions}

In this section we show the weak existence and strong existence of DDSDEs with singular drifts. 
First of all we recall the notions of martingale solutions and weak solutions for \eqref{SDE}.
Let $\mC$ be the space of all continuous functions from $\mR_+$ to $\mR^d$, which is endowed with the usual Borel $\sigma$-field $\cB(\mC)$.
The set of all probability measures on $(\mC,\cB(\mC))$ is denoted by $\cP(\mC)$. 
Let $w_t$ be the coordinate process over $\mC$, that is,
$$
w_t(\omega)=\omega_t,\ \ \omega\in\mC.
$$
For $t\geq 0$, let $\cB_t(\mC)=\sigma\{w_s: s\leq t\}$ be the natural filtration.
For a probability measure $\mP\in\cP(\mC)$, the expectation with respect to $\mP$ will be denoted by $\mE$ if there is no confusion. 
\bd[Martingale solutions]\label{Def2}
We call a probability measure $\mP\in\cP(\mC)$ a martingale solution of DDSDE \eqref{SDE} with initial distribution $\nu\in\cP(\mR^d)$
if $\mP\circ w_0^{-1}=\nu$ and for any $f\in C^\infty(\mR^d)$,
$$
\int^{t}_0|\sL^{\sigma^\mP}_s f|(w_s)\dif s+\int^t_0|b^\mP_s\cdot\nabla f|(w_s)\dif s<\infty,\ \ \mP-a.s,\ \ \forall t>0,
$$
where $\sigma^\mP_t(x):=\sigma_t(x,\mu^\mP_t)$ and $b^\mP_t(x):=b_t(x,\mu^\mP_t)$, $\mu^\mP_t:=\mP\circ w_t^{-1}$, and
\begin{align}\label{EQ1}
M^f_t:=f(w_{t})-f(w_0)-\int^{t}_0(\sL^{\sigma^\mP}_s f)(w_s)\dif s-\int^t_0 (b^\mP_s\cdot\nabla f)(w_s)\dif s,
\end{align}
is a continuous local $\cB_t(\mC)$-martingale under $\mP$.
All the martingale solutions of DDSDE \eqref{SDE} with coefficients $\sigma,b$ and initial distribution $\nu$ 
are denoted by $\sM^{\sigma,b}_\nu$.
\ed

\bd[Weak solutions]
\label{Def99}
 Let $(X,W)$ be two $\mR^d$-valued continuous adapted processes on some 
 filtered probability space $(\Omega,\sF, (\sF_t)_{t\geq 0}, \bP)$. 
 We call 
 $$
 (\Omega,\sF, (\sF_t)_{t\geq 0}, \bP; X,W)
 $$ 
 a weak solution of DDSDE \eqref{SDE} with initial distribution $\nu\in\cP(\mR^d)$ if 
\begin{enumerate}[{\rm (i)}]
\item $\bP\circ X^{-1}_0=\nu$ and $W$ is a $d$-dimensional standard $\sF_t$-Brownian motion.
\item For all $t>0$, it holds that
$$
\int^t_0|b_s|(X_s,\mu_{X_s})\dif s+\int^t_0\|\sigma_s\sigma^*_s\|_{HS}(X_s,\mu_{X_s})\dif s<\infty,\ \ \bP-a.s.
$$
and
\begin{align}\label{Def9}
X_t=X_0+\int^t_0b_s(X_s,\mu_{X_s})\dif s+\int^t_0\sigma_s(X_s,\mu_{X_s})\dif W_s,\ \ \bP-a.s.
\end{align}
\end{enumerate}
\ed
\br\rm
It is well known that weak solutions and martingale solutions are equivalent (cf. \cite{St-Va}), which means that for any $\mP\in\sM^{\sigma,b}_\nu$, there is a 
weak solution 
$$
(\Omega,\sF, (\sF_t)_{t\geq 0}, \bP; X,W)
$$ 
to DDSDE \eqref{SDE} with initial distribution $\nu\in\cP(\mR^d)$ such that
$$
\mP=\bP\circ X^{-1}.
$$
\er

Now we make the following assumptions about $\sigma$ and $b$:
\begin{enumerate}[{\bf (H$^{\sigma,b}$)}]
\item For each $t,x$, the mapping $\mu\mapsto \sigma_t(x,\mu)$ is weakly continuous,
and there are $c_0\geq 1$ and $\gamma\in(0,1]$ such that for all $t\geq 0$, $x,x',\xi\in\mR^d$ and $\mu\in\cP(\mR^d)$,
\begin{align}\label{SIG}
\qquad\quad c_0^{-1}|\xi|\leq |\sigma_t(x,\mu)\xi|\leq c_0|\xi|,\ 
\|\sigma_t(x,\mu)-\sigma_t(x',\mu)\|_{HS}\leq c_0|x-x'|^\gamma.
\end{align}
The drift $b$ satisfies one of the following conditions:
\begin{enumerate}[{\rm (i)}]
\item For each $t,x$, the mapping $\mu\mapsto b_t(x,\mu)$ is weakly continuous, and for some $(p,q)\in\sI_1$ and $\kappa_0>0$, 
\begin{align}\label{GR2}
\sup_{Z\in\bS_{\rm toch}}\nor b^Z\nor_{\widetilde\mL^{p}_{q}(T)}\leq\kappa_0<\infty.
\end{align}
\item $b$ has the form \eqref{LK2} with $\bar b$ satisfying {\bf (H$^b$)}.
\end{enumerate}
\end{enumerate}

It should be noticed that under {\bf (H$^{b}$)}, \eqref{GR2} holds. Indeed, by definition we have
\begin{align*}
\nor b^Z\nor^q_{\widetilde\mL^{p}_{q}(T)}
&=\sup_{z\in\mR^d}\int^T_0\left\|\chi^z_r\int_{\mR^d}\bar b_s(\cdot,y)\mu_{Z_s}(\dif y)\right\|_p^q\dif s\\
&\leq\sup_{z\in\mR^d}\int^T_0\left\|\chi^z_r\int_{\mR^d}h_s(\cdot-y)\mu_{Z_s}(\dif y)\right\|_p^q\dif s\\
&\leq\sup_{z\in\mR^d}\int^T_0\!\!\int_{\mR^d}\left\|\chi^{z-y}_r h_s\right\|_p^q\mu_{Z_s}(\dif y)\dif s\\
&\leq\int^T_0\sup_{z\in\mR^d}\left\|\chi^{z}_r h_s\right\|_p^q\mu_{Z_s}(\mR^d)\dif s=\int^T_0\nor h_s\nor_p^q\dif s.
\end{align*}

To show the existence of weak solutions, we first establish the following apriori estimates.
\bl\label{Le35}
Let $\beta>0$. Under {\bf (H$^{\sigma,b}$)}, for any $\nu\in\cP_\beta(\mR^d)$ and $Z\in\bS_{\rm toch}$, there is a unique weak solution 
$(\Omega,\sF, (\sF_t)_{t\geq 0}, \bP; X,W)$ 
to the following SDE:
$$
\dif X_t=b^Z_t(X_t)\dif t+\sigma^Z_t(X_t)\dif W_t,\ \ \bP\circ X^{-1}_0=\nu.
$$
Moreover, letting $\Theta=(d,p,q,c_0,\gamma, \kappa_0,\beta)$, we have
\begin{enumerate}[{\rm (i)}]
\item For any $T>0$, there is a $C_1=C_1(\Theta,T)>0$ such that 
\begin{align}\label{PK10}
\bE\left(\sup_{t\in[0,T]}|X_t|^\beta\right)\leq C_1(\bE|X_0|^{\beta}+1),
\end{align}
and for any $\delta<T$,
\begin{align}\label{PK1}
\bE\left(\sup_{t\in[0,T-\delta]}|X_{t+\delta}-X_t|^\beta\right)\leq C_1\delta^{\beta/2}.
\end{align}
\item  For any $(p_1,q_1)\in\sI_2$ and
$T>0$,  there is a constant $C_2=C_2(p_1,q_1,\Theta,T)>0$ such that  for all $0\leq t_0<t_1\leq T$ and $f\in\widetilde\mL^{p_1}_{q_1}(t_0,t_1)$,
\begin{align}\label{Kry2}
\bE\left(\int^{t_1}_{t_0} f_s(X_s)\dif s\Big|\sF_{t_0}\right)\leq C_2\nor f\nor_{\widetilde\mL^{p_1}_{q_1}(t_0,t_1)}.
\end{align}
\end{enumerate}
\el
\begin{proof}
The proof of this lemma is essentially contained in \cite{Zh-Zh}. For the reader's convenience, we sketch the proofs below.
We use Zvonkin's transformation to kill the drift $b^Z$.  For $\lambda, T>0$, consider the following backward PDE:
$$
\p_t u+(\sL^{\sigma^Z}_t-\lambda) u+b^Z\cdot\nabla u+b^Z=0,\ u(T,x)=0.
$$
Since $b^Z\in\widetilde\mL^{p}_{q}(T)$ with $(p,q)\in\sI_1$, by Theorem \ref{pde}, for $\lambda\geq 1$,
there is a unique solution $u\in\widetilde\mH^{2,p}_{q}(T)$ solving the above PDE. Moreover, for any $\alpha\in[0,2-\frac{2}{q})$,
there is a constant $c_1=c_1(\alpha,\Theta,T)>0$ such that for all $\lambda\geq 1$,
\begin{align}\label{HF11}
\lambda^{1-\frac{\alpha}{2}-\frac{2}{q}}\nor u\nor_{\widetilde\mH^{\alpha,p}_\infty(T)}
+\nor u\nor_{\widetilde\mH^{2,p}_{q}(T)}\leq c_1\nor b^Z\nor_{\widetilde\mL^{p}_{q}(T)}. 
\end{align}
In particular, since $\frac{d}{p}+\frac{2}{q}<1$, by \eqref{Sob} we can choose $\lambda$ large enough so that
$$
\|u\|_{\mL^{\infty}(T)}+\|\nabla u\|_{\mL^{\infty}(T)}\leq 1/2. 
$$
Now if we define 
$$
\Phi_t(x):=x+u_t(x),
$$
then it is easy to see that
\begin{align}\label{LK6}
|x-y|/2\leq |\Phi_t(x)-\Phi_t(y)|\leq 2|x-y|,
\end{align}
and
\begin{align}\label{ER5}
\p_t\Phi+\sL^{\sigma^Z}_t\Phi+b^Z\cdot\nabla\Phi=\lambda u.
\end{align}
By the generalized It\^o formula and \eqref{ER5}, we have
\begin{align}
Y_t:=\Phi_t(X_t)&=\Phi_0(X_0)+\lambda\int^t_0u_s(X_s)\dif s+\int^t_0(\sigma^Z_s\cdot\nabla\Phi_s)(X_s)\dif W_s,\no\\
&=\Phi_0(X_0)+\int^t_0\tilde b_s(Y_s)\dif s+\int^t_0\tilde\sigma_s(Y_s)\dif W_s,\label{LK7}
\end{align}
where 
$$
\tilde\sigma:=(\sigma^Z\cdot\nabla\Phi)\circ\Phi^{-1},\  \ \tilde b:=\lambda u\circ\Phi^{-1}.
$$ 
Moreover, by \eqref{HF11}, \eqref{LK6} and the Sobolev embedding \eqref{Sob}, it is easy to see that for some 
$c_2=c_2(\Theta, T)>0$ and $\gamma_0=\gamma_0(\gamma,p,q)\in(0,1)$,
\begin{align}\label{HF2}
c_2^{-1}|\xi|\leq |\tilde\sigma_t(x)\xi|\leq c_2|\xi|,\ \ 
\|\tilde\sigma_t(x)-\tilde\sigma_t(y)\|_{HS}\leq c_2|x-y|^{\gamma_0},
\end{align}
and
\begin{align}\label{HF20}
\|\tilde b\|_{\mL^\infty(T)}+\|\nabla\tilde b\|_{\mL^\infty(T)}\leq 4\lambda.
\end{align}
By well-known results, SDE \eqref{LK7} admits a unique weak solution (cf. \cite{St-Va}).
Moreover, as in \cite{Zh-Zh}, one can check that $X_t:=\Phi^{-1}_t(Y_t)$ solves the original SDE.

(i) Let $\beta>0$. By \eqref{HF2} and \eqref{HF20}, estimate \eqref{PK10} directly follows by BDG's inequality. We prove \eqref{PK1}.
Fix $\delta\in(0,T)$. Let $\tau$ be any stopping time less than $T-\delta$. By equation \eqref{LK7} and  BDG's inequality, we have
\begin{align*}
\bE|Y_{\tau+\delta}-Y_\tau|^\beta&\lesssim \bE\left|\int^{\tau+\delta}_{\tau}\tilde b_s(X_s)\dif s\right|^\beta
+\bE\left|\int^{\tau+\delta}_{\tau}\tilde\sigma_s(X_s)\dif W_s\right|^\beta\\
&\lesssim\|\tilde b\|^\beta_{\mL^\infty(T)}\delta^\beta+\|\tilde \sigma\|^\beta_{\mL^\infty(T)}\delta^{\beta/2}\leq C\delta^{\beta/2},
\end{align*}
which yields \eqref{PK1}  by \cite[Lemma 2.7]{Zh-Zh1} and \eqref{LK6}.

(ii) It was proved in \cite[Theorem 2.1]{Zh0}
(see also \cite[Theorem 5.7]{Xi-Zh}) that for any $(p_1,q_1)\in\sI_2$, there is a constant $C_2=C_2(p_1,q_1,\Theta,T)>0$ such that  
for all $0\leq t_0<t_1\leq T$ and $f\in\widetilde\mL^{p_1}_{q_1}(t_0,t_1)$,
$$
\bE\left(\int^{t_1}_{t_0} f_s(Y_s)\dif s\Big|\sF_{t_0}\right)\leq C_2\nor f\nor_{\widetilde\mL^{p_1}_{q_1}(t_0,t_1)}.
$$
By a change of variable and \eqref{LK6} again, we obtain \eqref{Kry2}.
\end{proof}

\br
An important conclusion of (ii) above is the following 
Khasminskii's type estimate (see \cite[Lemma 3.5]{Xi-Zh}): For any $\lambda, T>0$ and $f\in\widetilde\mL^{p_1}_{q_1}(T)$ with $(p_1,q_1)\in\sI_2$,
\begin{align}\label{Kh}
\bE\exp\left(\lambda\int^T_0|f_s(X_s)|\dif s\right)\leq C_3,
\end{align}
where $C_3$ only depends on $\lambda,\Theta,p_1,q_1,T$ and $\nor f\nor_{\widetilde\mL^{p_1}_{q_1}(T)}$.
\er

Now we can  show the following weak existence result.
\begin{theorem}
Let $\beta>2$. Under {\bf (H$^{\sigma,b}$)}, for any $\nu\in\cP_\beta(\mR^d)$, 
there exists a weak solution $(\Omega,\sF, (\sF_t)_{t\geq 0}, \bP; X,W)$ to DDSDE \eqref{SDE} with $\bP\circ X^{-1}_0=\nu$. 
\end{theorem}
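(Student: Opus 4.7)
The plan is to produce a weak solution of \eqref{SDE} by a Picard-type iteration in $\bS_{\rm toch}$, extract a subsequential limit via tightness, and pass to the limit in the associated martingale problem using Lemmas \ref{Le32}--\ref{Le33}. Starting from the constant process $Z^0_t\equiv X_0$ of law $\nu$, given $Z^n$ I invoke Lemma \ref{Le35} to obtain a weak solution $X^{n+1}$ of the frozen linear SDE
$$
\dif X^{n+1}_t = b^{Z^n}_t(X^{n+1}_t)\dif t + \sigma^{Z^n}_t(X^{n+1}_t)\dif W^{n+1}_t,\qquad X^{n+1}_0\sim\nu,
$$
and set $Z^{n+1}:=X^{n+1}$. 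Since {\bf (H$^{\sigma,b}$)} forces \eqref{GR2} uniformly in the driving process (the implication from {\bf (H$^b$)} is carried out explicitly in the text), the estimates \eqref{PK10}--\eqref{Kry2} of Lemma \ref{Le35} hold with constants independent of $n$. The moment bound \eqref{PK10} and increment bound \eqref{PK1}, combined with $\beta>2$, yield tightness of $\{\Law(X^n)\}$ in $\cP(\mC)$ via Kolmogorov's criterion, while \eqref{Kry2} places each $X^n$ uniformly in $\bK^{p_1,q_1}_{T,C_2}$ for every $(p_1,q_1)\in\sI_2$.

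Extracting (and relabelling) a convergent subsequence and applying Skorokhod's representation, I may assume that all $X^n$ and a limit process $X$ are realized on a common probability space $(\tilde\Omega,\tilde\sF,\tilde\bP)$ with $X^n\to X$ almost surely in $\mC$; in particular $\mu_{X^n_t}\to\mu_{X_t}$ weakly for every $t\ge 0$. By the equivalence of weak and martingale solutions recalled after Definition \ref{Def99}, it is enough to check that for each $f\in C^\infty_c(\mR^d)$ the process
$$
M^f_t := f(X_t) - f(X_0) - \int_0^t \sL^{\sigma^X}_s f(X_s)\dif s - \int_0^t b^X_s\cdot\nabla f(X_s)\dif s
$$
is a continuous local martingale under $\tilde\bP$ with respect to the natural filtration of $X$; a standard enlargement-of-space argument then produces the driving Brownian motion $W$ required by Definition \ref{Def99}.

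For each $n$ the analogous process $M^{f,n}_t$ built from $(\sigma^{Z^n},b^{Z^n},X^{n+1})$ is a martingale. Passing to the limit in the diffusion part is routine: \eqref{SIG} gives boundedness, the H\"older dependence in $x$ and the weak continuity in $\mu$ imply $\sL^{\sigma^{Z^n}}_s f(X^{n+1}_s)\to\sL^{\sigma^X}_s f(X_s)$ almost surely and in a uniformly integrable fashion. The heart of the argument, and the main obstacle, is the drift convergence
$$
\lim_{n\to\infty}\tilde\bE\int_0^T\left|b^{Z^n}_s(X^{n+1}_s)\cdot\nabla f(X^{n+1}_s) - b^X_s(X_s)\cdot\nabla f(X_s)\right|\dif s = 0.
$$
This is exactly the content of Lemma \ref{Le32} in case {\bf (H$^{\sigma,b}$)(i)} (applied after shrinking the Krylov indices $(p_1,q_1)$ slightly inside $\sI_2$ so that a factor $\gamma>1$ can be absorbed into the integrability of $b^{Z^n}$, using weak continuity of $\mu\mapsto b_t(x,\mu)$), and of Lemma \ref{Le33} in case {\bf (H$^{\sigma,b}$)(ii)} (where the lack of weak continuity is compensated by the two-variable Krylov estimate \eqref{GQ2}). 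The Krylov admissibility and moment hypotheses required by those lemmas are precisely what \eqref{Kry2} and \eqref{PK10} deliver uniformly in $n$. Together with a uniform $L^2$-bound on $M^{f,n}_t$ coming from the boundedness of $\sigma$ and the compact support of $f$, the martingale property then passes to the limit, yielding the desired weak solution.
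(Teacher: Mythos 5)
Your iteration step and the a priori bounds are fine: Lemma \ref{Le35} applies to each frozen equation with constants depending only on $\kappa_0$, so the uniform moment, increment and Krylov estimates, and hence tightness, all hold. The genuine gap is in the passage to the limit. Tightness only gives a \emph{subsequence} $X^{n_k}\to X$; but the drift in the equation satisfied by $X^{n_k}$ is $b^{X^{n_k-1}}$, i.e.\ it involves the law of the \emph{previous} iterate, which in general is not a member of the extracted subsequence. Hence you cannot conclude $\mu_{X^{n_k-1}_t}\to\mu_{X_t}$, which is precisely the hypothesis ($Y^n_t\to Y_t$ in distribution with $Y=X$) needed to apply Lemma \ref{Le32} or Lemma \ref{Le33} with the same limit in both slots. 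Your sentence ``I may assume that all $X^n$ and a limit process $X$ are realized \dots with $X^n\to X$ almost surely'' silently upgrades subsequential convergence to convergence of the full sequence; justifying that would require a stability/contraction estimate in the measure argument (Lipschitz-type dependence of $b,\sigma$ on $\mu$, as in Section 4), which is not available under {\bf (H$^{\sigma,b}$)} alone --- in case (ii) one only has the total variation bound \eqref{TTV}. Even if you extract a joint limit $(X^{n_k-1},X^{n_k})\to(Y,X)$, you only obtain that $X$ solves a \emph{linear} SDE with coefficients $b^{Y},\sigma^{Y}$, and there is no reason why $\mu_{Y_t}=\mu_{X_t}$; the McKean--Vlasov self-consistency is exactly what is lost.

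The paper circumvents this by approximating the coefficient rather than decoupling the law: it truncates $b$ to $b^n$ and invokes the known weak existence results for DDSDEs with bounded measurable drift (\cite{Mi-Ve}, \cite{Zh1}), so that each $X^n$ solves the genuinely distribution dependent equation \eqref{SDE0}, the law entering the drift being that of $X^n$ itself. Then along any convergent subsequence the almost surely convergent process and the process whose law appears in the coefficients coincide, and Lemmas \ref{Le32}--\ref{Le33} (together with the limit theorem from \cite{Ja-Sh}) apply directly. If you insist on a frozen-coefficient scheme, the naive Picard iteration must be replaced by a fixed point argument for the map sending $Z$ to the law of the solution of the frozen SDE, on a suitable compact convex set of laws (Schauder, as in \cite{Li-Mi}); that requires a continuity statement for this map which your proposal does not establish. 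The remaining ingredients of your argument (tightness from \eqref{PK10}--\eqref{PK1}, uniform Krylov bounds from \eqref{Kry2}, and the martingale-problem passage to the limit) are consistent with the paper's proof, but they do not repair this structural mismatch of indices.
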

\begin{proof}
Let $X^0_t\equiv X_0$. For $n\in\mN$, consider the following approximating SDE:
\begin{align}
X^n_t=X^n_0 +\int^t_0b^n_s(X^n_s, \mu_{X^n_s})\dif s+\int^t_0\sigma_s(X^n_s, \mu_{X^n_s})\dif W_s,\label{SDE0}
\end{align}
where 
$$
b^n_s(x,\mu):=(-n)\vee b_s(x,\mu)\wedge n,\ \ \bar b^n_s(x,y):=(-n)\vee \bar b_s(x,y)\wedge n.
$$
Since $b^n$ is bounded measurable, by \cite{Mi-Ve} or \cite[Theorem 1.2]{Zh1}, there is a weak solution 
$$
(\Omega,\sF, (\sF_t)_{t\geq 0}, \bP; X^n,W)
$$ 
to DDSDE \eqref{SDE0}  with $\bP\circ (X^n_0)^{-1}=\nu$. Moreover, 
since
$$
\sup_{Z\in\bS_{\rm toch}}\nor b^{n,Z}\nor_{\widetilde\mL^{p}_{q}(T)}\leq\sup_{Z\in\bS_{\rm toch}}\nor b^{Z}\nor_{\widetilde\mL^{p}_{q}(T)}\leq\kappa_0,
$$
by Lemma \ref{Le35}, the following uniform estimates hold:
\begin{enumerate}[{\rm (i)}]
\item For any $T>0$, there is a constant $C_1>0$ such that 
$$
\sup_n\bE\left(\sup_{t\in[0,T]}|X^n_t|^\beta\right)\leq C_1(\bE|X_0|^{\beta}+1),
$$
and for all $\delta\in(0,T)$,
$$
\sup_n\bE\left(\sup_{t\in[0,T-\delta]}|X^n_{t+\delta}-X^n_t|^\beta\right)\leq C_1\delta^{\beta/2}.
$$
\item Let $(p_1,q_1)\in\sI_2$. For any $T>0,$ there is a $C_2>0$ such that 
for all $f\in\widetilde\mL^{p_1}_{q_1}(T)$,
$$
\sup_n\bE\left(\int^T_0 f_s(X^n_s)\dif s\right)\leq C_2\nor f\nor_{\widetilde\mL^{p_1}_{q_1}(T)}.
$$
\end{enumerate}
Now by (i), the laws $\mQ^n$ of $(X^n, W)$ in $\mC\times\mC$ are tight. Let $\mQ$ be any accumulation point of $\mQ^n$.
Without loss of generality, we assume that $\mQ^n$ weakly converges to some probability measure $\mQ$.
By Skorokhod's representation theorem,  there are a probability space $(\tilde\Omega,\tilde\sF,\tilde\bP)$ and 
random variables $(\tilde X^n , \tilde W^n)$ and $(\tilde X,\tilde W)$ defined on it such that
\begin{align}\label{FD4}
(\tilde X^n , \tilde W^n)\to (\tilde X,\tilde W),\ \ \tilde\bP-a.s.
\end{align}
and
\begin{align}\label{FD5}
\tilde\bP\circ(\tilde X^n , \tilde W^n)^{-1}=\mQ^n=\bP\circ(X^n, W)^{-1},\quad
\tilde\bP\circ(\tilde X, \tilde W)^{-1}=\mQ.
\end{align}
Define $\tilde\sF^n _t:=\sigma(\tilde W^n _s, \tilde X^n_s;s\leq t)$. 
We note that
\begin{align*}
&\bP(W _t-W _s\in\cdot |\sF _s)=\bP(W _t-W _s\in\cdot)\\
&\Rightarrow
\tilde\bP(\tilde W^n _t-\tilde W^n _s\in\cdot |\tilde \sF^n _s)=\tilde\bP(\tilde W^n _t-\tilde W^n _s\in\cdot).
\end{align*}
In other words, $\tilde W^n $ is an $\tilde\sF_t^n $-Brownian motion. Thus, by \eqref{SDE0} and \eqref{FD5} we have
$$
\tilde X^n _t=\tilde X^n_0
+\int^t_0b^n_s(\tilde X^n_s,\mu_{\tilde X^n_s})\dif s+\int^t_0\sigma_s(\tilde X^n _s,\mu_{\tilde X^n_s})\dif \tilde W^n _s.
$$
By (ii), \eqref{FD4}, Lemmas \ref{Le32}, \ref{Le33} and \cite[Theorem 6.22, p383]{Ja-Sh}, one can take limits as $n\to\infty$ to obtain
$$
\tilde X_t=\tilde X_0+\int^t_0b_s(\tilde X_s,\mu_{\tilde X_s})\dif s+\int^t_0\sigma_s(\tilde X_s,\mu_{\tilde X_s})\dif \tilde W_s.
$$
Here we only check that the assumptions of Lemma \ref{Le33} are satisfied in the case that
$b$ takes the form \eqref{LK2} with $\bar b$ satisfying {\bf (H$^b$)}. Clearly, by (ii) above, for any $(p_1,q_1)\in\sI_2$,
there is a $\kappa>0$ such that for each $n\in\mN$,
$$
\tilde X^n\in\bK^{p_1,q_1}_{T,\kappa}.
$$
We note that $|\bar b_t(x,y)|\leq h_t(x-y)$, where for some $(p,q)\in\sI_1$,
$h\in L^q_{loc}(\mR_+;\widetilde L^p(\mR^d))\subset\widetilde\mL^p_q.$ One can choose $\gamma>1$ so that
$\tfrac{d\gamma }{p}+\tfrac{\gamma}{q}<1.$ Now if we take $p_1=p_2=\frac{p}{\gamma}$, $q_0=\frac{q}{\gamma}$, 
$q_1=q_2=\frac{2q}{q+\gamma}$, then it is easy to see that
$(p_1,q_1)\in\sI_2$ and 
$$
\bar b\in\widetilde\mL^{p,\infty}_{q}=\widetilde\mL^{\gamma p_1,\infty}_{\gamma q_0}\subset\cap_{p'\geq 1}\widetilde\mL^{\gamma p_1,p'}_{\gamma q_0}.
$$
Thus one can apply Lemma \ref{Le33} to conclude that
$$
\lim_{n\to\infty}\tilde\bE\left(\int^t_0|b_s(\tilde X^n_s,\mu_{\tilde X^n_s})-b_s(\tilde X_s,\mu_{\tilde X_s})|\dif s\right)=0.
$$
Moreover, as in showing \eqref{QA1}, we also have
$$
\lim_{m\to\infty}\sup_n\tilde\bE\left(\int^t_0|\bar b^m_s-\bar b_s|(\tilde X^n_s,Y^n_s)\dif s\right)=0,
$$
where $Y^n_\cdot$ is an independent copy of $\tilde X^n_\cdot$.
The proof is thus complete.
\end{proof}

About the existence of strong solutions, we have

\bc\label{Co38}
Let $\beta>2$. Under {\bf (H$^{\sigma,b}$)}, if for some $(p_1,q_1)\in\sI_1$, 
$$\sup_{Z\in\bS_{\rm toch}}\nor\nabla\sigma^Z\nor_{\widetilde\mL^{q_1}_{p_1}(T)}<\infty,$$
then for any initial random variable $X_0$ with finite $\beta$-order moment, 
there exists a strong solution to DDSDE \eqref{SDE}.
\ec
\begin{proof}
Let $(\Omega,\sF, (\sF_t)_{t\geq 0}, \bP; X,W)$ be a weak solution of DDSDE \eqref{SDE}. Define
$$
b^X_t(x):=b_t(x,\mu_{X_t}),\ \ \sigma^X_t(x):=\sigma_t(x,\mu_{X_t}),\ \mu_{X_t}:=\bP\circ X^{-1}_t.
$$
Consider the following SDE:
$$
\dif Z_t=b^X_t(Z_t)\dif t+\sigma^X_t(Z_t)\dif W_t.
$$
Under the assumption of the theorem, it has been shown in \cite{Xi-Xi-Zh-Zh} that there is a unique strong solution to this equation.
Since $X$ also satisfies the above equation, by strong uniqueness, we obtain that $X=Z$ is a strong solution.
\end{proof}

\br
Although we have shown the existence of strong or weak solutions, the uniqueness of strong solutions or weak solutions is a more difficult problem.
\er

\section{Uniqueness of strong and weak solutions}

In this section we study the uniqueness of strong and weak solutions. We introduce the following assumptions about the dependence on third variable $\mu$:
\begin{enumerate}[\bf (A$^{\sigma,b}_\theta$)]
\item Let $(p,q), (p_1,q_1)\in\sI_1$ and $\theta\geq 1$. It holds that 
$$
\sup_{Z\in\bS_{\rm toch}}\nor b^Z\nor_{\widetilde\mL^{p}_q(T)}<\infty,\ \ \sup_{Z\in\bS_{\rm toch}}\nor\nabla\sigma^Z\nor_{\widetilde\mL^{p_1}_{q_1}(T)}<\infty,
$$
and there are $\ell\in L^q_{loc}(\mR_+)$ and a constant $c_0\geq 1$ such that
for any two random variables $X,Y$ with finite $\theta$-order moments,
\begin{align}\label{DG2}
\begin{split}
&\nor b_t(\cdot,\mu_X)-b_t(\cdot,\mu_Y)\nor_p\leq \ell_t\|X-Y\|_{\theta},\ \\
&\|\sigma_t(\cdot,\mu_X)-\sigma_t(\cdot,\mu_Y)\|_\infty\leq c_0\|X-Y\|_{\theta},
\end{split}
\end{align}
where $\|\cdot\|_\theta$ stands for the $L^\theta$-norm in the probability space $(\Omega,\sF,\bP)$.
\end{enumerate}
Notice that \eqref{DG2} is equivalent to that for all $\mu,\mu'\in\cP_\theta(\mR^d)$,
\begin{align*}
\begin{split}
&\|b_t(\cdot,\mu)-b_t(\cdot,\mu')\|_p\leq \ell_t\cW_\theta(\mu,\mu'),\\ 
&\|\sigma_t(\cdot,\mu)-\sigma_t(\cdot,\mu')\|_\infty\leq c_0\cW_\theta(\mu,\mu'),
\end{split}
\end{align*}
where $\cW_\theta$ is the usual Wasserstein metric of $\theta$-order.
For convenience, we would like to use \eqref{DG2} rather than introducing the Wasserstein metric.
\br
We note that in \cite{Hu-Wa}, \eqref{DG2} is assumed to hold for $p=\infty$.
\er
We first show the following strong uniqueness result.
\bt\label{Th51}
Let $\theta\geq 1$ and $\beta>2\vee\theta$. Under {\bf (H$^{\sigma,b}$)} and {\bf (A$^{\sigma,b}_\theta$)},
for any initial random variable $X_0$ with finite $\beta$-order moment,
there is a unique strong solution to DDSDE \eqref{SDE}.
\et
\begin{proof}
Below we fix  $p,q\in\sI_1$, and 
without loss of generality, we consider the time interval $[0,1]$ and assume that for some $\gamma>1$,
\begin{align}\label{GG}
\|\ell\|_{L^{\gamma q}(0,1)}+\sup_{Z\in\bS_{\rm toch}}\|b^Z\|_{\mL^{\gamma q}_{\gamma p}(1)}<\infty.
\end{align}
Otherwise, we may choose $\gamma>1$ so that $\frac{2\gamma}{q}+\frac{d\gamma}{p}<1$ holds and replace $(p,q)$ with $(p/\gamma,q/\gamma)$.
The existence of strong solutions has been shown in Corollary \ref{Co38}. We only need to prove the pathwise uniqueness.
Let $X,Y$ be two strong solutions defined on the same probability space with same starting points $X_0=Y_0$ a.s.
We divide the proof into three steps and use the convention that all the constants below will be independent of $T\in[0,1]$.

(i)  Let $T\in(0,1)$ and $\lambda>0$. We consider the following backward PDE:
\begin{align}\label{ET4}
\p_t u^X+(\sL^{\sigma^X}_t-\lambda) u+b^X\cdot\nabla u^X+b^X=0,\ u^X(T,x)=0.
\end{align}
By Theorem \ref{pde}, for $\lambda\geq 1$,
there is a unique solution $u^X\in\widetilde\mH^{2,p}_{q}(T)$ solving the above PDE. Moreover, for any $\alpha\in[0,2-\frac{2}{q})$,
there is a constant $c_1>0$ such that for all $\lambda\geq 1$ and $T\in[0,1]$,
\begin{align}\label{HF1}
\lambda^{1-\frac{\alpha}{2}-\frac{2}{q}}\nor u^X\nor_{\widetilde\mH^{\alpha,p}_\infty(T)}
+\nor u^X\nor_{\widetilde\mH^{2,p}_{q}(T)}\leq c_1\nor b^X\nor_{\widetilde\mL^{p}_{q}(T)}. 
\end{align}
In particular, since $\frac{d}{p}+\frac{2}{q}<1$, by \eqref{Sob}, we can choose $\lambda$ large enough so that
\begin{align}\label{ER8}
\|u^X\|_{\mL^{\infty}(T)}+\|\nabla u^X\|_{\mL^{\infty}(T)}\leq 1/2,\ \ \forall T\in[0,1]. 
\end{align}
Below we shall fix such a $\lambda$ and define 
$$
\Phi^X_t(x):=x+u^X_t(x).
$$ 
It is easy to see that
$$
\p_t\Phi^X+\sL^{\sigma^X}_t\Phi^X+b^X\cdot\nabla\Phi^X=\lambda u^X.
$$

(ii) By the generalized It\^o formula, we have
\begin{align}\label{KJ1}
\tilde X_t:=\Phi^X_t(X_t)=\Phi^X_0(X_0)+\lambda\int^t_0u^X_s(X_s)\dif s+\int^t_0\tilde\sigma^X_s(X_s)\dif W_s,
\end{align}
where 
$$
\tilde\sigma^X:=\sigma^X\cdot\nabla\Phi^X.
$$
Similarly, we define $\tilde Y_t:=\Phi^Y_t(Y_t)$,  and for simplicity write
$$
\xi_t:=X_t-Y_t,\ \ \tilde \xi_t:=\tilde X_t-\tilde Y_t.
$$
Noting that by \eqref{ER8},
$$
|x-y|\leq 2|\Phi^X_t(x)-\Phi^X_t(y)|\leq 2|\Phi^X_t(x)-\Phi^Y_t(y)|+2\|u^X-u^Y\|_{\mL^\infty(T)}
$$
and
$$
|\Phi^X_t(x)-\Phi^Y_t(y)|\leq 2|x-y|+\|u^X-u^Y\|_{\mL^\infty(T)},
$$
we have
\begin{align}\label{KJ2}
|\xi_t|\leq 2|\tilde\xi_t|+2\|u^X-u^Y\|_{\mL^\infty(T)},\ \ |\tilde\xi_t|\leq 2|\xi_t|+\|u^X-u^Y\|_{\mL^\infty(T)}.
\end{align}
By \eqref{KJ1} and again It\^o's formula, we have for any $\beta\geq 1$,
\begin{align*}
|\tilde\xi_t|^{\beta}&=|\tilde\xi_0|^{\beta}+\beta\lambda\int^t_0|\tilde\xi_s|^{\beta-2}\<\tilde\xi_s,u^X_s(X_s)-u^Y_s(Y_s)\>\dif s\\
&\quad+\beta\int^t_0|\tilde\xi_s|^{\beta-2}\<(\tilde\sigma^X_s(X_s)-\tilde\sigma^Y_s(Y_s))^*\tilde\xi_s, \dif W_s\>\\
&\quad+\beta\Big(\tfrac{\beta}{2}-1\Big)\int^t_0|\tilde\xi_s|^{\beta-4}|(\tilde\sigma^X_s(X_s)-\tilde\sigma^Y_s(Y_s))^*\tilde\xi_s|^2\dif s\\
&\quad+\frac{\beta}{2}\int^t_0|\tilde\xi_s|^{\beta-2}\|\tilde\sigma^X_s(X_s)-\tilde\sigma^Y_s(Y_s)\|^2_{HS}\dif s\\
&:=I_1+I_2+I_3+I_4+I_5.
\end{align*}
Since by \eqref{ER8},
\begin{align*}
|u^X_t(x)-u^Y_t(y)|\leq |x-y|+\|u^X-u^Y\|_{\mL^\infty(T)},
\end{align*}
by Young's inequality we obtain
\begin{align*}
I_2&\lesssim \int^t_0|\tilde\xi_s|^{\beta}\dif s+\lambda\int^t_0|u^X_s(X_s)-u^Y_s(Y_s)|^\beta\dif s\\
&\lesssim \int^t_0(|\tilde\xi_s|^{\beta}+\lambda |\xi_s|^{\beta})\dif s
+\lambda^{\beta}T\|u^X-u^Y\|_{\mL^\infty(T)}^{\beta}.
\end{align*}
Let 
$$
g^X_s(x):=|\nabla^2 u^X_s(x)|+|\nabla\sigma^X_s(x)|+\|\nabla u^X\|_{\mL^\infty(T)}+\|\sigma^X\|_{\mL^\infty(T)}.
$$ 
By the definition of $\tilde\sigma^X$, we also have that
\begin{align*}
&|\tilde\sigma^X_s(x)-\tilde\sigma^Y_s(y)|\\
&\leq \|\sigma^Y\|_{\mL^\infty(T)}|\nabla\Phi^X_s(x)-\nabla\Phi^Y_s(y)|
+|\sigma^X_s(x)-\sigma^Y_s(y)|\cdot\|\nabla\Phi^X\|_{\mL^\infty(T)}\\
&\leq \|\sigma^Y\|_{\mL^\infty(T)}\Big(|\nabla u^X_s(x)-\nabla u^X_s(y)|+|\nabla u^X_s(y)-\nabla u^Y(s,y)|\Big)\\
&\quad+\Big(|\sigma^X_s(x)-\sigma^X_s(y)|+|\sigma^X_s(y)-\sigma^Y_s(y)|\Big)\cdot\|\nabla\Phi^X\|_{\mL^\infty(T)}\\
&\stackrel{\eqref{ES2}}{\lesssim} |x-y|\Big(\cM_1 g^X_s(x)+\cM_1 g^X_s(y)\Big)+\|\nabla u^X-\nabla u^Y\|_{\mL^\infty(T)}+
\|\sigma^X_s-\sigma^Y_s\|_\infty.
\end{align*}
Hence, 
\begin{align*}
I_4+I_5&\lesssim \int^t_0\Big(|\xi_s|^\beta+|\tilde\xi_s|^\beta\Big)\Big(\cM g^X_s(X_s)+\cM g^X_s(Y_s)\Big)^2\dif s\\
&\quad+T\|\nabla u^X-\nabla u^Y\|^{\beta}_{\mL^\infty(T)}+\int^t_0\|\sigma^X_s-\sigma^Y_s\|_\infty^{\beta}\dif s.
\end{align*}
Combining the above calculations and noting that $|\tilde\xi_0|\leq \|u^X_0-u^Y_0\|_\infty$, we obtain
\begin{align}\label{KH1}
\begin{split}
|\tilde\xi_t|^\beta&\lesssim\|u^X-u^Y\|^\beta_{\mH^{1,\infty}_\infty(T)}
+\int^t_0\Big(|\tilde\xi_s|^\beta+|\xi_s|^\beta+\|\xi_s\|_\theta^\beta\Big)\dif s\\
&\quad+\int^t_0\Big(|\xi_s|^\beta+|\tilde\xi_s|^\beta\Big)\Big(\cM_1 g^X_s(X_s)+\cM_1 g^X_s(Y_s)\Big)^2\dif s+M_t,
\end{split}
\end{align}
where $M_t$ is a continuous local martingale.

(iii) Now we  define
$$
A_t:=t+\int^t_0\Big(\cM_1 g^X_s(X_s)+\cM_1 g^X_s(Y_s)\Big)^2\dif s.
$$
By \eqref{KH1} and \eqref{KJ2}, we obtain that for all $t\in[0,T]$,
\begin{align*}
|\xi_s|^\beta+|\tilde\xi_s|^\beta\lesssim \|u^X-u^Y\|^\beta_{\mH^{1,\infty}_\infty(T)}+\int^t_0\|\xi_s\|_\theta^\beta\dif s
+\int^t_0\Big(|\xi_s|^\beta+|\tilde\xi_s|^\beta\Big)\dif A_s+M_t.
\end{align*}
Note that by the assumption and \eqref{GW1},
$$
(s,x)\mapsto (\cM_1 |\nabla^2 u^X_s(x)|)^2\in \widetilde\mL^{p/2}_{q/2}(T),
$$ 
and
$$
(s,x)\mapsto (\cM_1 |\nabla\sigma^X_s(x)|)^2\in \widetilde\mL^{p_1/2}_{q_1/2}(T).
$$ 
Since $(\frac{p}{2},\frac{q}{2}), (\frac{p_1}{2},\frac{q_1}{2})\in\sI_2$, by Khasminskii's estimate \eqref{Kh},  we have
$$
\mE\exp{\gamma A_T}<\infty,\ \ \forall \gamma>0,\ \ \forall T\in[0,1].
$$
Thus we can use the stochastic Gronwall inequality \eqref{gron} to derive that
\begin{align}\label{ET6}
\sup_{s\in[0,T]}\|\xi_s\|_\theta^\beta=\left(\sup_{s\in[0,T]}\bE |\xi_s|^{\theta}\right)^{\beta/\theta}&\lesssim 
\|u^X-u^Y\|^\beta_{\mH^{1,\infty}_\infty(T)}+\int^T_0\|\xi_s\|_\theta^\beta\dif s.
\end{align}
Noticing that by \eqref{DG2},
$$
\nor b^X-b^Y\nor_{\widetilde\mL^p_q(T)}\leq \left(\int^T_0\ell^q_t\|X_t-Y_t\|^q_\theta\dif t\right)^{1/q}
\leq\|\ell\|_{L^q(0,T)}\sup_{t\in[0,T]}\|\xi_t\|_\theta,
$$
and
$$
\|\sigma^X-\sigma^Y\|_{\mL^\infty(T)}\leq c_0\sup_{t\in[0,T]}\|X_t-Y_t\|_\theta=c_0\sup_{t\in[0,T]}\|\xi_t\|_\theta,
$$
we have by \eqref{es10},
\begin{align*}
&\|u^X-u^Y\|_{\mH^{1,\infty}_\infty(T)}\\
&\lesssim \nor b^X-b^Y\nor_{\widetilde\mL^p_q(T)}+\nor b^X\nor_{\widetilde\mL^p_q(T)}\left(\|\sigma^X-\sigma^Y\|_{\mL^\infty(T)}
+\nor b^X-b^Y\nor_{\widetilde\mL^p_q(T)}\right)\\
&\lesssim \left(\|\ell\|_{L^q(0,T)}+\nor b^X\nor_{\widetilde\mL^p_q(T)}\right)\sup_{t\in[0,T]}\|\xi_t\|_\theta
\stackrel{\eqref{GG}}{\lesssim}T^{\frac{\gamma-1}{\gamma q}}\sup_{t\in[0,T]}\|\xi_t\|_\theta.
\end{align*}
Substituting this into \eqref{ET6}, we obtain
$$
\sup_{s\in[0,T]}\|\xi_s\|_\theta^\beta\leq CT^{\frac{\beta(\gamma-1)}{\gamma q}}\sup_{t\in[0,T]}\|\xi_t\|^\beta_\theta,\ \ T\in(0,1),
$$
where $C$ does not depend on $T\in(0,1)$.
By choosing $T$ small enough, we get $\|\xi_t\|^\beta_\theta=0$ for all $t\in[0,T]$.
By shifting the time $T$, we obtain the uniqueness.
\end{proof}

It is obvious that $b$ defined in \eqref{LK2} does not satisfy \eqref{DG2}. Below we shall relax it to the weighted total variation norm by Girsanov's transformation.
The price we have to pay is that we need to assume that the diffusion coefficient does not depend on the time marginal law of $X$. 
For $\theta\geq 1$, let 
$$
\phi_\theta(x):=1+|x|^\theta.
$$
We assume
\begin{enumerate}[{\bf ($\widetilde{\bf A}^{\sigma,b}_\theta$)}]
\item Let $(p,q), (p_1,q_1)\in\sI_1$ and $\theta\geq 1$ and $\sigma_t(x,\mu)=\sigma_t(x)$. It holds that 
$$
\sup_{Z\in\bS_{\rm toch}}\nor b^Z\nor_{\widetilde\mL^{p}_q(T)}<\infty,\ \ \nor\nabla\sigma\nor_{\widetilde\mL^{p_1}_{q_1}(T)}<\infty,
$$
and there is an $\ell\in L^q_{loc}(\mR_+)$ such that for all $\mu,\mu'\in\cP(\mR^d)$ and $t\geq 0$,
\begin{align}\label{CC1}
\begin{split}
\nor b(t,\cdot,\mu)-b(t,\cdot,\mu')\nor_p\leq \ell_t\|\phi_\theta\cdot (\mu-\mu')\|_{TV}.
\end{split}
\end{align}
\end{enumerate}
It should be noted that \cite[Theorem 6.15]{Vi} implies,
$$
\cW_\theta(\mu,\mu')\leq c\|\phi_\theta\cdot (\mu-\mu')\|^{1/\theta}_{TV}.
$$
\bt\label{Th43}
Let $\theta\geq 1$ and $\beta>2\theta$. Under {\bf (H$^{\sigma,b}$)} and {\bf ($\widetilde{\bf A}^{\sigma,b}_\theta$)},
for any initial random variable $X_0$ with finite $\beta$-order moment,
there is a unique weak solution to DDSDE \eqref{SDE}, which is also a unique strong solution.
\et
\begin{proof}
We use the Girsanov transform in the same way asin \cite{Mi-Ve} to show the weak uniqueness, and so also the strong uniqueness. 
Since under the assumptions of the theorem, weak solutions are also strong solutions (see Corollary \ref{Co38}),
without loss of generality, let $X^{(i)}, i=1,2$ be two solutions of SDE \eqref{SDE} defined on the same probability space $(\Omega,\sF,\bP)$
and with the same Brownian motion and starting point $\xi$. That is,
\begin{align}\label{EQ0}
\dif X^{(i)}_t=\sigma_t(X^{(i)}_t)\dif W_t+b_t(X^{(i)}_t,\mu^{(i)}_t)\dif t,\ \ X^{(i)}_0=\xi,
\end{align}
where $\mu^{(i)}_t=\bP\circ (X^{(i)}_t)^{-1}$. We want to show $\mu^{(1)}_t=\mu^{(2)}_t$.

Since $\sigma_t(x,\mu)=\sigma_t(x)$ satisfies \eqref{SI} under our assumptions, it is well known that there is a unique weak solution to SDE
$$
\dif Z_t=\sigma_t(Z_t)\dif W_t,\ \ Z_0=\xi.
$$
Let $\beta>2\theta$. Since $\sigma$ is bounded, it is easy to see that
\begin{align}\label{Mo1}
\sup_{t\in[0,T]}\bE|Z_t|^\beta\leq C\Big(\bE|\xi|^\beta+1\Big).
\end{align}
Define
$$
\tilde b^{(i)}_s(x):=\sigma^{-1}_s(x)\cdot b^{X^{(i)}}_s(x),\ \ \tilde W^{(i)}_t:=W_t-\int^t_0\tilde b^{(i)}_s(Z_s)\dif s,
$$
and
$$
\sE^{(i)}_T:=\exp\left\{\int^T_0\tilde b^{(i)}_s(Z_s)\cdot\dif W_s-\frac{1}{2}\int^T_0|\tilde b^{(i)}_s(Z_s)|^2\dif s\right\}.
$$
Since $\nor\tilde b^{(i)}\nor_{\widetilde\mL^p_q(T)}\leq\nor b^{X^{(i)}}\nor_{\widetilde\mL^p_q(T)}<\infty$ for some $(p,q)\in\sI_1$, 
by Khasminskii's estimate \eqref{Kh}, we have
\begin{align}\label{ET5}
\bE\exp\left\{\gamma\int^T_0|\tilde b^{(i)}_s(Z_s)|^2\dif s\right\}\leq C_{T,\gamma},\ \ \forall \gamma>0,
\end{align}
and for any $\gamma\in\mR$,
\begin{align}\label{ET50}
\bE (\sE^{(i)}_T)^\gamma\leq C_{T,\gamma}<\infty.
\end{align}
Hence, for each $i=1,2$, $\bE\sE^{(i)}_T=1$, and $\tilde W^{(i)}$ is still a Brownian motion under $\sE^{(i)}_T\cdot\bP$, and
$$
\dif Z_t=\sigma_t(Z_t)\dif\tilde W^{(i)}_t+b^{X^{(i)}}_t(Z_t)\dif t,\ \ Z_0=\xi.
$$
Since the above SDE admits a unique strong solution (see also \eqref{EQ0}), we have
$$
(\sE^{(i)}_T\bP)\circ Z_{T}^{-1}=\bP\circ (X^{(i)}_{T})^{-1}=\mu^{(i)}_T, \ \ i=1,2.
$$
Therefore, for $\delta=\frac{\beta}{\beta-\theta}<2$, by H\"older's inequality, we get
\begin{align}
&\|\phi_\theta\cdot(\mu^{(1)}_T-\mu^{(2)}_T)\|_{TV}=\|\phi_\theta\cdot ((\sE^{(1)}_T\bP)\circ Z_{T}^{-1}-(\sE^{(2)}_T\bP)\circ Z_{T}^{-1})\|_{TV}\no\\
&\qquad\leq\bE\Big(\phi_\theta(Z_T)|\sE^{(1)}_T-\sE^{(2)}_T|\Big)\leq\|\phi_\theta(Z_T)\|_{\delta/(\delta-1)}\|\sE^{(1)}_T-\sE^{(2)}_T\|_{\delta}\no\\
&\qquad=\|1+|Z_T|^\theta\|_{\beta/\theta}\|\sE^{(1)}_T-\sE^{(2)}_T\|_{\delta}
\stackrel{\eqref{Mo1}}{\leq} C\|\sE^{(1)}_T-\sE^{(2)}_T\|_{\delta}.\label{LK9}
\end{align}
Noting that
$$
\dif \sE^{(i)}_t=\sE^{(i)}_t\tilde b^{(i)}_t(Z_t)\cdot\dif W_t,
$$
we have
$$
\dif (\sE^{(1)}_t-\sE^{(2)}_t)=(\sE^{(1)}_t\tilde b^{(1)}_t(Z_t)-\sE^{(2)}_t\tilde b^{(2)}_t(Z_t))\cdot\dif W_t.
$$
By It\^o's formula, we have
\begin{align*}
&\dif |\sE^{(1)}_t-\sE^{(2)}_t|^2=|\sE^{(1)}_t\tilde b^{(1)}_t(Z_t)-\sE^{(2)}_t\tilde b^{(2)}_t(Z_t)|^2\dif t+M_t,\\
&\quad\leq 2|\sE^{(1)}_t-\sE^{(2)}_t|^{2}|\tilde b^{(1)}_t(Z_t)|^2\dif t+2|\sE^{(2)}_t(\tilde b^{(1)}_t(Z_t)-\tilde b^{(2)}_t(Z_t))|^2\dif t+M_t,
\end{align*}
where $M$ is a continuous local martingale.
Since $\delta<2$, by the stochastic Gronwall inequality \eqref{gron} and \eqref{ET5}, we obtain
$$
\|\sE^{(1)}_T-\sE^{(2)}_T\|_{\delta}^2\lesssim \int^T_0\bE |\sE^{(2)}_t(\tilde b^{(1)}_t(Z_t)-\tilde b^{(2)}_t(Z_t))|^2\dif t.
$$
Since $(p,q)\in\sI_1$, one can choose $\gamma\in(1,1/(d/p+2/q))$ so that 
$$(p/(2\gamma),q/(2\gamma))\in\sI_2.$$
Thus by H\"older's inequality and Krylov's estimate \eqref{Kry2}, we further have
\begin{align*}
\|\sE^{(1)}_T-\sE^{(2)}_T\|_{\delta}^2&\stackrel{\eqref{ET50}}{\lesssim} 
\left(\int^T_0\bE |\tilde b^{(1)}_t(Z_t)-\tilde b^{(2)}_t(Z_t)|^{2\gamma}\dif t\right)^{\frac{1}{\gamma}}\\
&\lesssim \nor |\tilde b^{(1)}-\tilde b^{(2)}|^{2\gamma}\nor^{1/\gamma}_{\widetilde\mL^{p/(2\gamma)}_{q/(2\gamma)}(T)}
=\nor\tilde b^{(1)}-\tilde b^{(2)}\nor^2_{\widetilde\mL^p_q(T)}\\
&\lesssim \left(\int^T_0\nor b_t(\cdot,\mu^{(1)}_t)-b_t(\cdot,\mu^{(2)}_t)\nor^{q}_p\dif t\right)^{\frac{2}{q}}\\
&\stackrel{\eqref{CC1}}{\lesssim} \left(\int^T_0\ell_t^{q}\|\phi_\theta\cdot (\mu^{(1)}_t-\mu^{(2)}_t)\|^{q}_{TV}\dif t\right)^{\frac{2}{q}},
\end{align*}
which together with \eqref{LK9} yields 
$$
\|\phi_\theta\cdot (\mu^{(1)}_T-\mu^{(2)}_T)\|^q_{TV}\leq C\int^T_0\ell_t^{q}\|\phi_\theta\cdot (\mu^{(1)}_t-\mu^{(2)}_t)\|^{q}_{TV}\dif t.
$$
By Gronwall's inequality, we obtain
$$
\|\phi_\theta\cdot (\mu^{(1)}_T-\mu^{(2)}_T)\|^{q}_{TV}=0\Rightarrow\mu^{(1)}_T=\mu^{(2)}_T.
$$
The proof is thus complete.
\end{proof}

\section{Application to nonlinear Fokker-Planck equations}
In this section we present some applications to nonlinear Fokker-Planck equations.
First of all we recall the following superposition principle: one-to-one correspondence between DDSDE \eqref{SDE} and nonlinear Fokker-Planck equation \eqref{Non},
which was first proved in \cite{Ba-Ro, Ba-Ro1}, and is based on a result for linear Fokker-Planck equations due to Trevisan \cite{Tr} (see also \cite{Fi} for the special linear case
where the coefficients are bounded). We repeat the argument from \cite{Ba-Ro, Ba-Ro1} here.
\bt[Superposition principle]\label{Th23}
Let $\mu_t:\mR_+\to\cP(\mR^d)$ be a continuous curve such that for each $T>0$,
\begin{align}\label{ET1}
\int^T_0\!\!\!\int_{\mR^d}\Big(|(\sigma^{ik}_t\sigma^{jk}_t)(x,\mu_t)|+|b_t(x,\mu_t)|\Big)\mu_t(\dif x)\dif t<\infty.
\end{align}
Then $\mu_t$ solves the nonlinear Fokker-Planck equation \eqref{Non} in the distributional sense 
if and only if there exists a martingale solution $\mP\in\sM^{\sigma,b}_\nu$ 
to DDSDE \eqref{SDE} so that for each $t>0$,
$$
\mu_t=\mP\circ w^{-1}_t.
$$
In particular, if there is at most one element in $\cM^{\sigma,b}_\nu$ with time martingale $\mu_t:=\mu_{X_t}, t\geq 0$, satisfying \eqref{ET1},
then there is at most one solution to \eqref{Non} satisfying \eqref{ET1}.
\et
\begin{proof}
If $\mP\in \sM^{\sigma,b}_\nu$ and $\mu_t=\mP\circ w^{-1}_t$, 
then by \eqref{ET1} and It\^o's formula, it is easy to see that $\mu_t$ solves \eqref{Non}. Now we assume $\mu_t$ 
solves \eqref{Non}. Consider the following linear Fokker-Planck equation:
$$
\p_t\tilde\mu_t=(\sL_t^{\sigma^\mu})^*\tilde\mu_t+\div(b^\mu_t\cdot\tilde\mu_t),
$$
where $b^\mu_t(x):=b_t(x,\mu_t)$ and $\sigma^\mu_t(x):=\sigma_t(x,\mu_t)$. Since $\mu_t$ is a solution of the above linear Fokker-Planck equation,
by \cite[Theorem 2.5]{Tr}, there is a martingale solution $\mP\in\sM^{\sigma^\mu, b^\mu}_\nu$ so that
$$
\mu_t=\mP\circ w_t^{-1}.
$$
In particular, $\mP\in\sM^{\sigma, b}_\nu$. The last assertion is then obvious and thus the proof is complete.
\end{proof}

From the above superposition principle and our well-posedness results, we can obtain the following wellposedness result about the nonlinear Fokker-Planck equations.
\bt
In the situations of Theorems \ref{Th51} and \ref{Th43}, there is a unique continuous curve $\mu_t$ solving the nonlinear Fokker-Planck equation \eqref{Non}.
\et

Now we turn to the proof of Theorem \ref{Th11}.
\begin{proof}[Proof of Theorem \ref{Th11}]
The existence and uniqueness of solutions to the nonlinear FPE \eqref{FK} 
are consequences of Theorem \ref{Th43} and Theorem \ref{Th23}. We now aim to show the existence and smoothness of the density $\rho^X_t(y)$.
Let $\mu_t$ be the solution of the Fokker-Planck equation \eqref{FK}. We consider the following SDE:
\begin{align}\label{SDE2}
\dif X_t=b^\mu_t(X_t)\dif t+\sqrt{2}\dif W_t, \ X_0=\xi,
\end{align}
where $b^\mu_t(x):=\int_{\mR^d}b_t(x,y)\mu_t(\dif y)$. Since $b^\mu\in\widetilde\mL^p_q$, where $\frac{d}{p}+\frac{2}{q}<1$,
it is well known that the operator $\Delta+b^\mu\cdot\nabla$ admits a heat kernel $\rho_{b^\mu}(s,x;t,y)$ (see \cite[Theorems 1.1 and 1.3]{Ch-Hu-Xi-Zh}),
which is continuous in $(s,x;t,y)$ on $\{(s,x;t,y): 0\leq s<t<\infty, x,y\in\mR^d\}$
and satisfies the following two-sided estimate: For any $T>0$, there are constants $c_0,\gamma_0>1$ such that for all $0\leq s<t\leq T$ and $x,y\in\mR^d$
$$
c_0^{-1}(t-s)^{-d/2}\e^{-\gamma_0 |x-y|^2/(t-s)}\leq \rho_{b^\mu}(s,x;t,y)\leq c_0(t-s)^{-d/2}\e^{-|x-y|^2/(\gamma_0(t-s))}, 
$$
and the gradient estimate: for some $c_1,\gamma_1>1$,
$$
|\nabla_x\rho_{b^\mu}(s,x;t,y)|\leq c_1(t-s)^{-(d+1)/2}\e^{-|x-y|^2/(\gamma_1(t-s))}. 
$$
If $\div b\equiv 0$, then $\rho_{b^\mu}(s,x;t,y)=\rho_{-b^\mu}(s,y;t,x)$, and so in this case,
$$
|\nabla_y\rho_{b^\mu}(s,x;t,y)|\leq c_1(t-s)^{-(d+1)/2}\e^{-|x-y|^2/(\gamma_1(t-s))}. 
$$
In particular, the density of the law of $X_t$ is just given by
$$
\rho^X_t(y)=\int_{\mR^d}\rho(0,x;t,y)(\bP\circ X^{-1}_0)(\dif x).
$$
Strong uniqueness of SDE \eqref{SDE2} ensures that $\rho^X_t(y)\dif y=\mu_t(\dif y)$.
The desired estimates now follow from the above estimates.
\end{proof}

{\bf Acknowledgement:} The authors thank Dr. Xing Huang for pointing out an error in the earlier version.

\end{document}